\patchcmd{\thebibliography}{\leftmargin\labelwidth}{\leftmargin\labelwidth\addtolength\itemsep{-0.1\baselineskip}}{}{}
\author{Boris Bukh\thanks{Department of Mathematical Sciences, Carnegie Mellon University, Pittsburgh, PA 15213, USA. Supported in part by a Sloan Research Fellowship and by U.S.\ taxpayers through NSF CAREER grant DMS-1555149.} \and Michael Tait\thanks{Department of Mathematics \& Statistics, Villanova University, Villanova, PA, 19085, USA. This work was completed while the second author was a postdoc at Carnegie Mellon University and was supported in part by NSF grant DMS-1606350.}}
\title{Tur\'an numbers of theta graphs}
\date{}
\newtheorem{theorem}{Theorem}
\newtheorem{lemma}[theorem]{Lemma}
\newtheorem{definition}[theorem]{Definition}
\newtheorem{proposition}[theorem]{Proposition}
\newcommand*{\eqdef}{\stackrel{\text{\tiny{def}}}{=}}            
\newcommand*{\abs}[1]{\lvert #1\rvert}                           
\newcommand*{\dgr}{\mathop{\overrightarrow{\deg}}}               
\newcommand*{\dgl}{\mathop{\overleftarrow{\deg}}}                
\newcommand*{\Nr}{\overrightarrow{N}}                            
\newcommand*{\Nl}{\overleftarrow{N}}                             
\newcommand*{\Lr}{L}                                             
\newcommand*{\E}{\mathbb{E}}                                     
\newcommand*{\F}{\mathbb{F}}                                     
\DeclareMathOperator{\ex}{ex}
\newcommand*{\TODO}[1]{\marginpar{\usefont{T1}{fxb}{o}{b}\fontsize{24pt}{26pt}\selectfont!}}
\begin{document}
\maketitle

\begin{abstract}
The theta graph $\Theta_{\ell,t}$ consists of two vertices joined by $t$ vertex-disjoint paths of length $\ell$ each.
For fixed odd $\ell$ and large $t$, we show that the largest graph not containing $\Theta_{\ell,t}$ has at most $c_{\ell} t^{1-1/\ell}n^{1+1/\ell}$
edges and that this is tight apart from the value of $c_{\ell}$.
\end{abstract}

\section{Introduction}

Given a graph $F$, the {\em Tur\'an number for $F$}, denoted by $\ex(n, F)$ is the maximum number of edges in an $n$-vertex graph that contains no subgraph isomorphic to $F$. Mantel and Tur\'an determined this function exactly when $F$ is a complete graph, and the study of Tur\'an numbers has become a fundamental problem in combinatorics (see \cite{FurediSimonovits, Keevash, Sidorenko} for surveys). The Erd\H{o}s--Stone theorem \cite{ErdosStone} determines the asymptotic behavior of $\ex(n, F)$ whenever $\chi(F) \geq 3$, and so the most interesting Tur\'an-type problems are when the forbidden graph is bipartite. 

One of the most well-studied bipartite Tur\'an problems is the even cycle problem: the study of $\ex(n, C_{2\ell})$. Erd\H{o}s initiated the study of this problem when he needed an upper bound on $\ex(n, C_4)$ in order to prove a theorem in combinatorial number theory \cite{Erdos1938}. The combination of the upper bounds by K\H{o}vari, S\'os and Tur\'an \cite{KovariSosTuran} and the lower bounds by Brown \cite{Brown} and Erd\H{o}s, R\'enyi and S\'os \cite{ErdosRenyiSos} gave the asymptotic formula \[\ex(n, C_4) \sim \frac{1}{2}n^{3/2}.\]

It is now known that for certain values of $n$ the extremal graphs must come from projective planes \cite{Furedi1983, Furedi1996, WillifordREU} and this is conjectured to be the case for all $n$ (see \cite{Furedi1994}).

A general upper bound for $\ex(n, C_{2\ell})$ of $c_\ell n^{1+1/\ell}$ for sufficiently large $n$ was originally claimed by Erd\H{o}s \cite{Erdos} and first published by Bondy and Simonovits \cite{BondySimonovits} who showed that one can take $c_\ell = 20\ell$. Subsequent improvements of the best constant $c_\ell$ to $8(\ell-1)$ by Verstra\"ete \cite{Verstraete}, to $(\ell-1)$ by Pikhurko \cite{Pikhurko}, and to $80\sqrt{\ell}\log \ell$ by Bukh and Jiang \cite{BukhJiang} were made, and this final bound is the current record.

As stated above, we have an asymptotic formula for $\ex(n, C_4)$. Additionally, the upper bound on $\ex(n,C_{2\ell})$ is of the correct order of magnitude for $\ell\in \{3,5\}$ \cite{Benson,Wenger}, i.e., $\ex(n,C_{2\ell})=\Theta(n^{1+1/\ell})$ for these values of~$\ell$. However, unlike the case of $C_4$, the sharp multiplicative constant is not known; see \cite{FurediNaorVerstraete} for the best bounds on $\ex(n, C_6)$. The order of magnitude for $\ex(n, C_{2\ell})$ is unknown for any $\ell \notin \{2,3,5\}$. 
The best known general lower bounds are given by Lazebnik, Ustimenko and Woldar \cite{LazebnikUstimenkoWoldar} (but see \cite{TerlepWilliford} for a better bound for the $\ex(n,C_{14})$ case).

Although it is unclear whether $\ex(n, C_{2\ell}) = \Omega(n^{1+1/\ell})$ holds in general, more is known if instead of forbidding a pair of internally disjoint paths of length $\ell$ between pairs of vertices (that is, a $C_{2\ell}$) one forbids several paths of length~$\ell$ between pairs of vertices. For $t\in \mathbb{N}$, let $\Theta_{\ell, t}$ be the graph made of~$t$ internally disjoint paths of length~$\ell$ connecting two endpoints. The study of $\ex(n, \Theta_{\ell, t})$ generalizes the even cycle problem as $\Theta_{\ell, 2} = C_{2\ell}$. Faudree and Simonovits showed \cite{FaudreeSimonovits} that 
\[
\ex(n, \Theta_{\ell, t}) = O_{\ell, t} \left(n^{1+1/\ell}\right).
\]

More recently, Conlon showed that this upper bound gives the correct order of magnitude if the number of paths is a large enough constant \cite{Conlon}. That is, there exists a constant $c_\ell$ such that $\ex(n, \Theta_{\ell, c_{\ell}}) = \Theta_\ell(n^{1+1/\ell})$. Verstra\"ete and Williford \cite{VerstraeteWilliford} constructed graphs with no $\Theta_{4,3}$ that have $(\frac{1}{2} - o(1))n^{5/4}$ edges.

In this paper, we are interested in the behavior of $\ex(n, \Theta_{\ell, t})$ when $\ell$ is fixed and $t$ is large. When $\ell=2$, a result of F\"uredi \cite{FurediBipartite} shows that $\ex(n,\Theta_{2,t}) \sim \tfrac{1}{2}\sqrt{t} n^{3/2}$. For general~$\ell$, the result of Faudree and Simonovits gives that $\ex(n, \Theta_{\ell, t}) \leq c_\ell t^{\ell^2}n^{1+1/\ell}$. We improve this bound as follows.

\begin{theorem}\label{thm:main}
For fixed $\ell\geq 2$, we have 
\[
  \ex(n,\Theta_{\ell,t})=O_{\ell}\left(t^{1-1/\ell} n^{1+1/\ell}\right).
\]
\end{theorem}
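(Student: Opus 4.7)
The plan is to follow a Bondy--Simonovits-style path-counting argument, sharpened so that the passage from length-$\ell$ walks to \emph{internally disjoint} paths loses only a factor of $t^{\ell-2}$, rather than the much larger factor implicit in the Faudree--Simonovits argument.

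\textbf{Reduction and path counting.} First I would pass to a subgraph $H \subseteq G$ of minimum degree $\delta \geq c_\ell\, t^{1-1/\ell} n^{1/\ell}$ by iteratively discarding vertices of degree below half the current average; since $H$ is still $\Theta_{\ell,t}$-free, it suffices to reach a contradiction from the existence of such an $H$. Then, fixing an arbitrary $v \in V(H)$, the number of walks of length $\ell$ starting at $v$ is at least $\delta(\delta-1)^{\ell-1}$, and a short inclusion--exclusion shows that all but an $O_\ell(n\delta^{\ell-1})$ fraction are genuine paths. Pigeonholing over the at most $n$ possible endpoints yields a vertex $u$ joined to $v$ by at least $N \geq c\, t^{\ell-1}$ paths of length $\ell$ for an appropriate $c = c(\ell)$.

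\textbf{Extracting internally disjoint paths.} The crux is to convert these $\gtrsim t^{\ell-1}$ paths into $t$ internally vertex-disjoint ones. I would use a greedy procedure controlled by a notion of ``heavy'' internal vertex: call $w$ heavy if it lies on more than $c' t^{\ell-2}$ of the $N$ paths as an internal vertex. If no vertex is heavy, then iteratively choosing a path, deleting its $\ell - 1$ internal vertices, and repeating destroys at most $(\ell-1)c' t^{\ell-2}$ paths per selection; the pool of $\gtrsim t^{\ell-1}$ paths thus sustains $t$ selections, giving the desired $\Theta_{\ell,t}$. If instead some vertex $w$ is heavy, I would pigeonhole over the position of $w$ along the path to obtain many short paths from $v$ to $w$ of some length $i$ and from $w$ to $u$ of length $\ell - i$, and feed these into an inductive form of the statement at a smaller value of~$\ell$.

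\textbf{Main obstacle.} The delicate step is the heavy-vertex case, because the naive reading ``many $\ell$-paths pass through $w$, hence many internally disjoint $\ell$-paths'' is plainly false: the paths may all share $w$, giving only a single internally disjoint path between $u$ and $v$. The real gain has to come from the minimum-degree hypothesis --- $w$ itself has $\delta$ neighbors in $H$, and a smaller $\Theta_{\ell',t}$ attached at $w$ must be rerouted around $w$ using this rich neighborhood to recover a full $\Theta_{\ell,t}$ between $u$ and $v$. Making the rerouting precise and arranging the exponents so that the induction closes at the target rate $t^{1-1/\ell}$ is, I expect, the technically demanding portion of the proof.
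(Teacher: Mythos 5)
Your counting frame agrees with the paper's numerology (minimum degree $d\approx t^{1-1/\ell}n^{1/\ell}$, about $d^{\ell}$ paths of length $\ell$ from a root, hence some pair joined by $\gtrsim t^{\ell-1}$ such paths, and a greedy extraction that works when no internal vertex lies on more than $O(t^{\ell-2})$ of them), but the heavy-vertex case, which you defer, is not a technicality: it is the whole difficulty, and the step you propose for it does not work. If $w$ is heavy, the abundance of $v$--$w$ paths of length $i<\ell$ is not a forbidden configuration, so no ``inductive form of the statement at a smaller value of $\ell$'' yields a contradiction; and a $\Theta_{i,t}$ rooted at $v,w$ cannot be rerouted into a $\Theta_{\ell,t}$ merely because $w$ has $\delta$ neighbours, since all $t$ continuations must end at the \emph{same} vertex $u$ and be internally disjoint --- nothing in your setup supplies that. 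Nor can you simply discard heavy vertices after the pigeonhole: the surviving path count may collapse, and heaviness can occur at every position and nest at every scale, so a single pigeonhole does not terminate. (A smaller issue: your walk-versus-path inclusion--exclusion, and indeed any such path count, needs an upper bound on degrees, which your min-degree cleaning does not provide; the paper imports a regularization of Bukh--Jiang giving maximum degree $O_\ell(d)$ precisely for this reason. Also, as written, $O_\ell(n\delta^{\ell-1})$ exceeds the total number of walks, so it cannot serve as the claimed error term.)

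The paper's proof is organized to avoid ever arriving at a single pair with many paths. It grows layers $L_0,\dots,L_\ell$ from a root, prunes at each stage the vertices of the new layer that admit more than $R_m\approx (Ct)^m$ linear paths from some earlier vertex (the bad sets $B_k$, whose sizes and whose effect on later layers are tracked explicitly), and measures progress by the number of root-to-layer paths rather than by degrees. The engine is an embedding lemma: if some vertex $v_{i-1}$ has too many linear paths into a set $B\subset L_k$ (violating this regularity), one cleans the path system to a sub-collection of large ``path-degree,'' builds an auxiliary bipartite graph between the last two layers with minimum degree $\ell t$, embeds there a subdivided star with $t$ legs, and completes its legs by pairwise disjoint linear paths back to the children of $v_{i-1}$, producing a $\Theta_{\ell,t}$ whose two branch vertices are $v_{i-1}$ and some new vertex --- in general \emph{not} the pair you started from. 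That relocation of the branch vertices is exactly the idea missing from your rerouting step, and without it (or some substitute) the proposal does not close.
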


When~$\ell$ is odd, we show that the dependence on~$t$ in Theorem~\ref{thm:main} is correct.
\begin{theorem}\label{thm:oddconstruction}
Let $\ell\geq 3$ be a fixed odd integer. Then
\[
\ex(n, \Theta_{\ell, t}) = \Omega_\ell\left(t^{1-1/\ell}n^{1+1/\ell}\right).
\]
\end{theorem}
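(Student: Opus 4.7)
The plan is to obtain the lower bound by a vertex blow-up of a small bipartite base graph $G_0$ whose existence is supplied by a random-algebraic construction in the spirit of \cite{Conlon}. For every fixed $\ell\ge 2$ one can produce a bipartite graph $G_0$ on $m$ vertices with the following two properties:
(i) $|E(G_0)|\ge a_\ell\, m^{1+1/\ell}$, and
(ii) between any two vertices of $G_0$, the number of walks of length $\ell$ is at most $C_\ell$. Property (ii) is strictly stronger than $\Theta_{\ell,c_\ell}$-freeness (which would bound only internally-disjoint paths), and it follows from a B\'ezout-type count applied to the polynomial equations defining an edge-path.

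Given $t$, set $s:=\lfloor t/(C_\ell+1)\rfloor$ and let $H$ be the blow-up of $G_0$: each vertex $u\in V(G_0)$ is replaced by an independent set $V(u)$ of size $s$, and each edge $uv\in E(G_0)$ is replaced by the complete bipartite graph between $V(u)$ and $V(v)$. Then $H$ inherits the bipartition of $G_0$, has $n:=sm$ vertices, and
\[
|E(H)|=s^2\,|E(G_0)|\ge a_\ell\, s^{1-1/\ell}\, n^{1+1/\ell}=\Omega_\ell\bigl(t^{1-1/\ell}\, n^{1+1/\ell}\bigr).
\]
To show $H$ contains no $\Theta_{\ell,t}$, suppose there were $t$ internally disjoint paths $P_1,\ldots,P_t$ of length $\ell$ joining two vertices $\widetilde u,\widetilde v$ of $H$. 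Since $H$ is bipartite and $\ell$ is odd, the projections $u,v$ of $\widetilde u,\widetilde v$ in $G_0$ must lie in different parts of $G_0$, so $u\neq v$. Each $P_i$ projects to a walk of length $\ell$ from $u$ to $v$ in $G_0$, and by (ii) there are at most $C_\ell$ such walks. For a single walk $W=(u_0,\ldots,u_\ell)$, two lifts of $W$ that are internally disjoint must choose distinct copies inside every fiber $V(u_j)$ that $W$ visits internally; since each fiber has size $s$, at most $s$ such lifts can coexist. Summing over walks yields at most $C_\ell\cdot s\le t-1$ internally disjoint paths in $H$ from $\widetilde u$ to $\widetilde v$, a contradiction.

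The central technical point is property (ii): a bound on the number of \emph{walks}, not merely of internally-disjoint paths, of length $\ell$ between pairs of vertices in $G_0$. Without (ii), walks in $G_0$ that revisit some internal vertex could still lift to many genuinely disjoint paths in the blow-up and defeat the counting. Constructing a base graph $G_0$ with $\Omega_\ell(m^{1+1/\ell})$ edges together with the walk-count bound is the algebraic heart of the argument; once such $G_0$ is available, the verification of $\Theta_{\ell,t}$-freeness uses only bipartiteness of $G_0$ and the parity of $\ell$.
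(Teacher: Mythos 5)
Your overall architecture (random algebraic base graph with few short connections, blown up by independent sets of size $\approx t/T$, with bipartiteness plus the parity of $\ell$ forcing the two hub vertices to project to distinct supervertices) is exactly the paper's. But there is a genuine gap at the step you yourself flag as the ``algebraic heart'': property (ii), a uniform constant bound $C_\ell$ on the number of \emph{walks} of length $\ell$ between any two vertices of $G_0$, is not just unproved --- it is incompatible with property (i). Any graph with $\Omega_\ell(m^{1+1/\ell})$ edges has a vertex $x$ of degree $D\geq a_\ell m^{1/\ell}$; taking $y$ to be any neighbour of $x$ and walks of the form $x,w_1,x,w_2,x,\dotsc,x,y$ (each $w_j$ an arbitrary neighbour of $x$) already gives at least $D^{(\ell-1)/2}\to\infty$ walks of length $\ell$ from $x$ to $y$ when $\ell\geq 3$ is odd. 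So no Bezout-type count can deliver (ii), and the base graph you postulate does not exist.

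The fix is the one the paper uses, and it shows that your worry that a path-only bound ``defeats the counting'' is unfounded. One constructs $G_0$ so that between any two vertices there are at most $T=T(\ell)$ \emph{paths} (self-avoiding) of length \emph{at most} $\ell$; this is what the random algebraic construction actually gives, via moment estimates together with the ``small or large'' dichotomy for bounded-complexity varieties (Lemma 2.7 of Bukh--Conlon), rather than a Bezout count. In the blow-up, project the $R$ internally disjoint paths to walks of length $\ell$ in $G_0$ and delete cycles from each walk to obtain a multiset of paths of length at most $\ell$ between the two (distinct, by parity) supervertices. Because the original paths are internally disjoint and each fiber has $m$ vertices, every vertex of $G_0$ other than the two endpoints occurs as an internal vertex of at most $m$ of the projected walks; hence each distinct short path can be produced at most about $m$ times, giving $R=O(Tm)<t$ for $m\approx t/T$. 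In other words, walks in $G_0$ that revisit vertices are harmless once you account for fiber multiplicities, and no bound on walk counts is needed (nor possible).
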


We do not know if Theorem~\ref{thm:main} is tight when $\ell$ is even. In this case, our best lower bound is the following.
\begin{theorem}\label{thm:evenconstruction}
Let $\ell\geq 2$ be a fixed even integer. Then
\[
 \ex(n, \Theta_{\ell, t}) = \Omega_\ell(t^{1/\ell} n^{1+1/\ell}).
\]
\end{theorem}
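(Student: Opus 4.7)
The plan is to use a random bipartite graph of critical density and eliminate bad $\Theta_{\ell,t}$ configurations by a deletion argument. Take $G \sim G(n,n,p)$ on bipartition $A \sqcup B$ of size $n$ each, with $p = c \, t^{1/\ell} n^{-1+1/\ell}$ for a small constant $c = c(\ell) > 0$. By linearity of expectation, $\mathbb{E}[\abs{E(G)}] = n^2 p = c \, t^{1/\ell} n^{1+1/\ell}$, matching the target edge count up to a constant depending on~$\ell$.

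Since $\ell$ is even, every length-$\ell$ path in $G$ has both endpoints in the same part of the bipartition. For any fixed pair $u,v$ in a common part, the expected number of length-$\ell$ paths from $u$ to $v$ is $\sim n^{\ell-1} p^\ell = c^\ell t$, which is substantially less than $t$ when $c$ is chosen small. Intuitively, most pairs should have fewer than $t$ vertex-disjoint length-$\ell$ paths, so $G$ is ``almost'' $\Theta_{\ell,t}$-free. For the pairs that support $\geq t$ vertex-disjoint such paths, I would remove a small set $D$ of edges to destroy all $\Theta_{\ell,t}$-copies and verify $\abs{D} = o(t^{1/\ell} n^{1+1/\ell})$, so that $G \setminus D$ still contains $\Omega(t^{1/\ell} n^{1+1/\ell})$ edges.

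The main difficulty is controlling the size of the deletion. The naive first-moment estimate gives $\mathbb{E}[\#\Theta_{\ell,t}(G)] \sim c^{t\ell} t^t n^2$, which dominates $\mathbb{E}[\abs{E(G)}]$ for large~$n$, so a blanket ``one deletion per copy'' alteration is insufficient. To get around this I would either (a)~sharpen the analysis via a concentration result such as Janson's inequality, showing that for $t$ large enough the number of length-$\ell$ paths between every pair is $O(c^\ell t) < t$ with high probability, so $G$ itself is $\Theta_{\ell,t}$-free with positive probability, or (b)~build the graph instead from a random edge-disjoint union of $k = \Theta(t^{1/\ell})$ random copies of Conlon's~\cite{Conlon} extremal $\Theta_{\ell,c_\ell}$-free graph, so that the $\leq c_\ell - 1$ paths per pair in each copy inflate to at most $\bigl((c_\ell - 1) k\bigr)^\ell = O(t)$ expected paths in the union. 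In either route, the analysis for even $\ell$ is complicated by the existence of closed walks of length $\ell$ in the base graph (absent in the odd bipartite case), which cluster length-$\ell$ paths and inflate the $\Theta_{\ell,t}$ count at copies of the same base vertex; this is precisely what prevents us from reaching the $t^{1-1/\ell}$ dependence of Theorem~\ref{thm:main} and accounts for the remaining gap between the even and odd cases.
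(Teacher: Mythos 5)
There is a genuine gap, and it sits exactly where you located the ``main difficulty.'' Route~(a) cannot work in the regime the theorem is about, namely $t$ fixed (or growing slowly) and $n\to\infty$. For a fixed pair $u,v$ in $G(n,n,p)$ with $p=c\,t^{1/\ell}n^{-1+1/\ell}$, the probability that $u,v$ are joined by $t$ internally disjoint paths of length $\ell$ is bounded below by a quantity of order $n^{t(\ell-1)}p^{t\ell}/t!\approx (ec^{\ell})^{t}$, which is a positive constant \emph{independent of $n$}. Hence in expectation a constant proportion of the $\Theta(n^2)$ pairs are hubs of a $\Theta_{\ell,t}$; no concentration inequality (Janson bounds lower tails in any case) can make ``every pair has $O(c^{\ell}t)<t$ paths'' hold with high probability, and deleting edges at $\Theta_\ell(n^2)$ bad pairs destroys far more than the $\Theta(t^{1/\ell}n^{1+1/\ell})$ edges you have. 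This is the familiar obstruction that purely random graphs cannot reach density $n^{1/\ell}$ for these problems, and it is why the paper does not use $G(n,p)$ at all.

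Route~(b) is essentially the paper's actual construction --- the paper takes the union of $h=(t/T)^{1/\ell}$ \emph{independent random algebraic graphs} (Conlon-type graphs from Definition~\ref{def:randomgraphs}), removes multiple edges, and deletes the few $Th^{\ell}$-bad pairs --- but your justification for it has a hole. Knowing that each copy individually is $\Theta_{\ell,c_\ell}$-free bounds the number of pairwise internally disjoint paths within one copy; it bounds neither the total number of length-$\le\ell$ paths within a copy nor, more importantly, the ``hybrid'' paths whose edges come from several different copies, and these hybrids are exactly what can create a $\Theta_{\ell,t}$ in the union. The paper handles this by re-running the algebraic analysis on the union: it classifies paths by type $(i_1,\dots,i_r)\in[h]^r$, uses the independence lemma (Lemma~\ref{lem:independence}) and the variety dichotomy (Lemma~\ref{lem:smallorlarge}) to show each pair's typed path count is either $O_\ell(1)$ or $\ge q/2$, with the latter having probability $O(q^{-2\ell})$, so the expected number of $Th^{\ell}$-bad pairs is only $O(h^{\ell})$ (Proposition~\ref{prop:fewpaths2}); deleting those pairs costs $O(h^{\ell}n)=o(hn^{1+1/\ell})$ edges, and $h=(t/T)^{1/\ell}$ gives the bound. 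Your heuristic count $\bigl((c_\ell-1)k\bigr)^{\ell}$ gestures at the right scaling but is not a proof, and it matters that the copies are fresh independent samples of the random algebraic model (so the moment and variety arguments apply to the union), not random relabelings of one fixed extremal graph. Finally, the reason the odd case gets $t^{1-1/\ell}$ is the blow-up trick of Section~\ref{sec:oddconstruction}, which fails for even $\ell$ because two clones of the same vertex can then be joined by many disjoint length-$\ell$ paths; that, rather than any feature of $G(n,p)$, is the source of the even/odd gap.
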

It would be interesting to close the gap between Theorems~\ref{thm:main} and~\ref{thm:evenconstruction} for even~$\ell$.

Since the proof of Theorem~\ref{thm:main} is relatively involved, we begin by introducing the main ideas in Section~\ref{sec:three} where we prove
the theorem in the case~$\ell=3$. Then in Sections \ref{sec:general} and \ref{sec:embedding} we extend this argument to prove the general upper bound. 
In Sections \ref{sec:oddconstruction} and~\ref{sec:evenconstruction} we give constructions for odd and even values of $\ell$ respectively.\medskip

\textbf{Related work.} A year after this work was completed Xu, Zhang, and Ge \cite{XuZhangGe} showed that a variant of the construction
in Theorem~\ref{thm:evenconstruction} demonstrates sharpness of the constant in the K\"ovari--S\'os--Tur\'an theorem.

\section{Case \texorpdfstring{$\ell=3$}{l=3}}\label{sec:three}
In this section we present the proof of Theorem~\ref{thm:main}, dealing 
with the case $\ell=3$. As every graph of average degree $4d$ contains a bipartite subgraph of average degree $2d$, and since
every graph of average degree $2d$ contains a subgraph of minimum degree $d$, we henceforth assume that the graph is bipartite of minimum degree~$d$. 

\begin{lemma}\label{lem:nottoobad}
Let $r$ be any vertex of $G$. Call a vertex $u$ \emph{bad} if $u\neq r$ and $u$ has more than~$t$ common neighbors
with~$r$. If $G$ is $\Theta_{3,t}$-free, then no neighbor of~$r$ is adjacent to~$t$ bad vertices.
\end{lemma}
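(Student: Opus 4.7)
My plan is to argue by contradiction: suppose some neighbor $v$ of $r$ is adjacent to $t$ bad vertices $u_1,\dots,u_t$, and build a $\Theta_{3,t}$ with endpoints $r$ and $v$, contradicting the $\Theta_{3,t}$-freeness of $G$. Each of the $t$ internally disjoint paths of length $3$ will have the shape $r$--$w_i$--$u_i$--$v$, where $w_i$ is a common neighbor of $r$ and $u_i$ still to be chosen, so the whole task reduces to selecting $w_1,\dots,w_t$ appropriately.

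Since $u_i$ is bad, $|N(r)\cap N(u_i)|\ge t+1$. The key observation is that $v$ itself always lies in $N(r)\cap N(u_i)$ (because $v\sim r$ and $v\sim u_i$), so after excluding $v$ at least $t$ candidates remain for $w_i$. I would now pick the $w_i$'s greedily: at step $i$ I additionally avoid the $i-1$ previously chosen $w_j$'s, leaving at least $t-(i-1)\ge 1$ admissible options whenever $i\le t$. Hence the greedy procedure succeeds.

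To conclude I would verify that the paths $r$--$w_i$--$u_i$--$v$ are genuine internally vertex-disjoint paths. Their internal vertices are $w_i$ and $u_i$; the $u_i$'s are distinct by hypothesis and the $w_i$'s by construction, so the only dangerous collision is $w_i=u_j$. Here the bipartiteness of $G$ is essential: the $u_j$'s are neighbors of $v$ and therefore lie in the same part of the bipartition as $r$, so $u_j\notin N(r)$, whereas $w_i\in N(r)$. The remaining distinctnesses ($r\ne v$, $u_i\ne r$ since $u_i$ is bad, $w_i\ne v$ by construction, no self-loops) are immediate.

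The main obstacle is that the greedy count is razor-thin: after excluding $v$, exactly $t$ candidates remain for each $w_i$, so at the last step only one admissible option survives. Making the argument work therefore rests on two subtle observations -- that $v$ is automatically one of the common neighbors of $r$ and $u_i$, so ``more than $t$'' is exactly the right hypothesis, and that bipartiteness forbids the $u_j$'s from invading the candidate pool for $w_i$. Either oversight would collapse the counting.
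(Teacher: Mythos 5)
Your proof is correct and follows essentially the same route as the paper's: the paper likewise builds the $t$ paths $r$--$z_i$--$u_i$--$w$ by greedily choosing, for each bad $u_i$, a common neighbor $z_i$ of $r$ and $u_i$ avoiding $w$ and the previously chosen $z_j$'s, using ``more than $t$'' for the count and the standing bipartiteness assumption for disjointness. Your explicit remarks that $v$ itself is one of the common neighbors and that bipartiteness rules out $w_i=u_j$ are just spelled-out versions of what the paper leaves implicit.
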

\begin{proof}
Suppose $w$ is adjacent to bad vertices $u_1,\dotsc,u_t$. Define a sequence of vertices $z_1,\dotsc,z_t$ as follows.
We let $z_i$ be any common neighbor of $r$ and $u_i$ other than $w,z_1,\dotsc,z_{i-1}$. It exists since there are more than~$t$
common neighbors between $r$ and $u_i$. Then $(wu_iz_ir)_{i=1}^t$ is a collection of $t$ disjoint paths of length $3$ from
$w$ to~$r$.
\end{proof}

\begin{proof}[Proof of Theorem~\ref{thm:main} for $\ell=3$]
Let $r$ be any vertex of $G$. Let $\Lr_0=\{r\}$. Let $\Lr_1$ be the set of all the neighbors of $r$.
Let $\Lr_2$ be the set of all vertices at distance $2$ from $r$ that have at most $t$ common neighbors
with $r$. Note that by Lemma~\ref{lem:nottoobad} each vertex in $\Lr_1$ has at least $d-t$ neighbors in~$\Lr_2$.
Call a vertex $v_1\in \Lr_1$ a \emph{parent} of $v_2\in \Lr_2$ if $v_1$ and $v_2$ are adjacent. Note that
a vertex in $L_2$ can have at most $t$ parents. Hence, each vertex in $\Lr_2$ has at least $d-t$
neighbors in $V(G)\setminus \Lr_1$.

Let $\Lr_3$ be all vertices in $V(G)\setminus \Lr_1$ that are adjacent to some
$\Lr_2$. Call $v_3\in \Lr_3$ a \emph{descendant} of $v_1\in \Lr_1$ if there is
a path of the form $v_1v_2v_3$ with $v_2\in \Lr_2$.

Let $B(v_1)\subset \Lr_3$ be the set of all the descendants of $v_1$
that have more than $t$ common neighbors with $v_1$. By Lemma~\ref{lem:nottoobad},
each $v_2\in N(v_1)$ has fewer than~$t$ neighbors in $B(v_1)$. 

Let $H$ be the subgraph of $G$ obtained from $G$ by removing all edges
between $B(v_1)$ and $N(v_1)$ for all $v_1\in \Lr_1$. For a vertex $v_2\in \Lr_2$, an edge incident to it is removed only when $v_2$ is adjacent to some $v_1\in \Lr_1$ and the other endpoint of this edge is a neighbor of $v_2$ in $B(v_1)$. We noted above that by Lemma~\ref{lem:nottoobad},
each $v_2\in N(v_1)$ has fewer than~$t$ neighbors in $B(v_1)$. Therefore since each $v_2\in \Lr_2$
has at most $t$ parents, each vertex in $\Lr_2$ has at least $d-t-t(t-1)=d-t^2$ neighbors
in~$\Lr_3$.

For a vertex $v_3\in \Lr_3$, let $p(v_3)$ be the number of paths (in $H$) of the form
$rv_1v_2v_3$ with $v_i\in \Lr_i$. We claim that $p(v_3)\leq 2t(t-1)$ for every
$v_3\in \Lr_3$. Indeed, suppose the contrary. We will construct a $\Theta_{3,t}$ subgraph as follows.
First, we pick any path $rv_1^{(1)}v_2^{(1)}v_3$ counted by~$p(v_3)$.
Since $v_3$ and $v_1^{(1)}$ have at most $t$ common neighbors, and since $r$ and $v_2^{(1)}$
have at most $t$ common neighbors, at most $2t$ paths counted by $p(v_3)$ intersect
$\{v_1^{(1)},v_2^{(2)}\}$.
So, we can pick another path $rv_1^{(2)}v_2^{(2)}v_3$ that is disjoint from $\{v_1^{(1)},v_2^{(2)}\}$.
We can repeat this, at each step selecting path $rv_1^{(i)}v_2^{(i)}v_3$ that is disjoint from $\bigcup_{j<i}\{v_1^{(j)},v_2^{(j)}\}$
for $i=1,\dotsc,t$. The paths $rv_1^{(i)}v_2^{(i)}v_3$ together form a $\Theta_{3,t}$. So, $p(v_3)\leq 2t(t-1)$ after all.

Since each vertex in $\Lr_1$ has at least $d-t$ neighbors in $\Lr_2$ and each vertex in $\Lr_2$ has at least $d-t^2$ neighbors in $L_3$,
it follows that
\[
  \abs{\Lr_3}\geq \frac{d(d-t)(d-t^2)}{2t(t-1)}.
\]
Since $|\Lr_3| \leq n$ the result follows.
\end{proof}

\section{General case}\label{sec:general}
\paragraph{Outline.}
The case of general $\ell$ is similar to the case $\ell=3$. Starting with a root
vertex, we build a sequence of layers $\Lr_1,\Lr_2,\dotsc,\Lr_{\ell}$ such that
each next layer is about~$d$ times larger than the preceding. The condition
of being $\Theta_{\ell,t}$-free is used to ensure that a vertex in~$\Lr_j$
descends from a vertex in~$\Lr_i$ in at most $O(t^{j-i-1})$ ways.
However, there are two complications that are not present in
the proof of the $\ell=3$ case.

First, in the definition of $\Lr_2$ we excluded
vertices that have too many neighbors back. Doing so affects degrees of yet-unexplored vertices, 
such as those in $\Lr_3$. That was not important for the $\ell=3$ case because $\Lr_3$ was the final layer. In general, though, we will maintain a set of `bad' vertices and will control 
how removal of these vertices affects subsequent layers.

Second, removing vertices from later layers reduces degrees of the vertices in the preceding
layers. So, instead of trying to ensure that each vertex has large degree, we will maintain
a weaker condition that there are many paths from the root to the leaves of the tree.

\paragraph{Minimum and maximum degree control.} As in the proof of the case $\ell=3$, we will need to ensure that
all vertices are of large degree. For technical reasons, which will become apparent in Section~\ref{sec:embedding}
we need to control not only the minimum, but also the maximum degrees. This is done with the help of the following
lemma:
\begin{lemma}[Theorem~12 in \cite{BukhJiang}, only in arXiv version]
  Every $n$-vertex graph with $\geq 6\ell c n^{1+1/\ell}$ edges contains a subgraph $G$ such that
\begin{itemize}
\item The graph $G'$ has at least $cn^{1/2\ell}$ vertices, and
\item Degree of each vertex of $G'$ is between $c v(G')^{1/\ell}$ and $\Delta c v(G')^{1/\ell}$ where $\Delta=(20\ell)^{2\ell}$.
\end{itemize}
\end{lemma}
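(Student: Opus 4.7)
My plan is a two-stage greedy cleanup: first secure the minimum-degree lower bound, then secure the maximum-degree upper bound.

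\textbf{Stage 1: min-degree cleanup.} Starting from $G_0 = G$, I would iteratively delete any vertex $v$ of the current subgraph $G_i$ (on $n_i$ vertices) whose degree is below $c n_i^{1/\ell}$. Each such deletion discards fewer than $c n^{1/\ell}$ edges, so after at most $n$ deletions the cumulative edge loss is at most $c \sum_{k=1}^n k^{1/\ell} \leq c n^{1+1/\ell}$, which is a small fraction of the starting $6\ell c n^{1+1/\ell}$ edges. The resulting $G_1$ has minimum degree at least $c v(G_1)^{1/\ell}$ and still carries $\geq (6\ell - 1) c n^{1+1/\ell}$ edges; in particular $v(G_1)$ is very large (of order $n^{(\ell+1)/(2\ell)}$), giving ample slack for the final size bound $v(G') \geq c n^{1/(2\ell)}$.

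\textbf{Stage 2: max-degree trimming via dyadic pigeonhole.} Dyadically partition $V(G_1)$ by degree, setting $V_j = \{v : 2^j c v(G_1)^{1/\ell} \leq \deg_{G_1}(v) < 2^{j+1} c v(G_1)^{1/\ell}\}$ for $j = 0, 1, \dots, J$ with $J = O(\log n)$. The goal is to find an index $j^* \leq \log_2 \Delta$ for which the induced subgraph $H = G_1[V_0 \cup \cdots \cup V_{j^*}]$ still has density $e(H)/v(H)^{1+1/\ell} \geq c$. In such an $H$ the maximum degree is automatically at most $2^{j^*+1} c v(G_1)^{1/\ell} \leq 2\Delta c v(H)^{1/\ell}$, and a final pass of Stage~1 on $H$ restores the min-degree condition to produce the required $G'$.

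\textbf{Main obstacle.} The delicate step is guaranteeing that some $j^* \leq \log_2 \Delta$ yields a dense enough $H$. I would peel off the top class $V_J$, re-run Stage~1, and continue only while the density $\Phi(H) = e(H)/v(H)^{1+1/\ell}$ stays above $c$. The monovariant $\Phi$ starts at $\geq 6\ell c$, must never drop below $c$, and can lose at most a multiplicative factor of roughly $20\ell$ per peeling round (from the per-round edge loss to high-degree classes combined with the re-cleanup). Compounding across the $O(\ell)$ rounds permitted by the density budget yields the explicit constant $\Delta = (20\ell)^{2\ell}$. The bulk of the proof is tracking this density decrement carefully enough to verify both the stopping criterion $j^* \leq \log_2 \Delta$ and the surviving vertex count $v(G') \geq c n^{1/(2\ell)}$.
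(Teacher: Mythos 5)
The paper itself gives no proof of this statement---it is imported verbatim from Theorem~12 of the arXiv version of Bukh--Jiang \cite{BukhJiang}---so your outline has to stand on its own, and as written it does not. The central gap is the step you call ``automatic'': the claimed bound $2^{j^*+1} c\, v(G_1)^{1/\ell} \leq 2\Delta c\, v(H)^{1/\ell}$ compares a degree cap measured against $v(G_1)$ with one measured against $v(H)\leq v(G_1)$, and it holds only if $v(H) \geq (2^{j^*}/\Delta)^{\ell}\, v(G_1)$, i.e.\ only if you prove the induced subgraph $H$ has not shrunk too much relative to $G_1$ --- a lower bound you never establish (for $2^{j^*}$ near $\Delta$ it would even force $v(H)\approx v(G_1)$). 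The same problem recurs, worse, at the ``final pass of Stage~1 on $H$'': deleting low-degree vertices shrinks the vertex count, which lowers the permitted maximum degree $\Delta c\, v(\cdot)^{1/\ell}$, while the actual degrees of the surviving vertices need not drop in step; so restoring the minimum-degree condition can destroy the maximum-degree condition, and vice versa. Breaking this circularity is exactly the content of almost-regularization lemmas of this type (going back to Erd\H{o}s--Simonovits), and your plan never addresses it---it simply alternates the two cleanups and asserts they stabilize.

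The quantitative bookkeeping also does not cohere. Your monovariant $\Phi$ starts at $6\ell c$ and is required to stay above $c$, a total multiplicative budget of $6\ell$; a single peeling round that ``can lose a factor of roughly $20\ell$'' already exhausts it, so ``compounding across the $O(\ell)$ rounds permitted by the density budget'' cannot yield $\Delta=(20\ell)^{2\ell}$---the constant is being named, not derived. Finally, the conclusion $v(G')\geq c n^{1/(2\ell)}$ does not follow from anything you maintain: keeping $e(H)\geq c\,v(H)^{1+1/\ell}$ and minimum degree $\geq c\,v(H)^{1/\ell}$ only forces $v(H)\gtrsim (2c)^{\ell/(\ell-1)}$, which for bounded $c$ is $O(1)$ and carries no dependence on $n$; since the peeling and induced-subgraph steps may discard all but a vanishing fraction of $V(G_1)$, the ``ample slack'' from $v(G_1)\approx n^{(\ell+1)/(2\ell)}$ is irrelevant unless you track some absolute quantity (e.g.\ a surviving edge count in terms of $n$) through every round, which the outline does not do. In short, Stage~1 is fine and standard, but the parts you defer---the size of $H$ relative to $G_1$, the interaction of the two degree conditions, the derivation of $\Delta$, and the final vertex-count bound---are precisely the substance of the lemma.
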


Henceforth we assume that our graph is bipartite, and that each vertex has degree between $d$ and~$\Delta d$, where $\Delta$ is as above.
We will show that $d^{\ell}\leq (8\ell t)^{\ell-1} n\bigl(1+o_{\ell}(1)\bigr)$, and hence that every $\Theta_{\ell,t}$-free graph
has at most $96\ell^2 t^{1-1/\ell}n^{1/\ell}\bigl(1+o_{\ell}(1)\bigr)$ edges (the factor of $6\ell$ is from the preceding lemma,
and another factor of $2$ is because of passing to a bipartite subgraph). 

\paragraph{Graph exploration process.} We shall use the same terminology as in the case $\ell=3$.
Namely, if $v\in \Lr_i$ and $u\in \Lr_{i+1}$ are neighbors, then we say that $v$ is a \emph{parent} of $u$
and $u$ is a \emph{child} of $v$. A path of the form $v_iv_{i+1}\dotsb v_j$ with $v_s\in \Lr_s$ for $i\leq s \leq j$
is called a \emph{linear path}; vertex $v_j$ is called a \emph{descendant} of~$v_i$.
We let $P(v_i,v_j)$ denote the number of linear paths from $v_i$ to~$v_j$. For sets $A\subset \Lr_i$
and $B\subset \Lr_j$, we denote by $P(A,B)$ the number of linear paths going from a vertex in $A$ 
to a vertex in~$B$.

In addition to sets $\Lr_0,\Lr_1,\dotsc,\Lr_k$ we will also maintain a sets~$B_1,\dotsc,B_{k-1}$ of \emph{bad} vertices.
All the sets $\Lr_0,\Lr_1,\dotsc,\Lr_k, B_1,\dotsc,B_{k-1}$ will be disjoint. 
We shall say that we are at \emph{stage}~$k$
if the sets $\Lr_0,\Lr_1,\dotsc,\Lr_k$ and $B_1,\dotsc,B_{k-1}$ have been defined, but the sets $\Lr_{k+1}$ and $B_k$ have not yet been defined.
We denote by $U\eqdef V(G)\setminus(L_0\cup \dotsb\cup L_k\cup B_1\cup\dotsb \cup B_{k-1})$
the set of \emph{unexplored} vertices.

For $v\in \Lr_i$, let $\Nr(v)$ be the set of children of $v$, and let $\Nl(v)$ be the set of parents of $v$.
We define $\dgr(v)\eqdef \abs{\Nr(v)}$ and $\dgl(v)\eqdef \abs{\Nl(v)}$ 
to be the number of children and parents of $v$, respectively.
The reason for this notation is that we imagine $L_0,\dotsc,L_k$ grow from left to right.
\begin{center}
\begin{tikzpicture}
\def\xspacing{1}      
\def\lastxspacing{2.8}  
\def\ylabel{1.4}      
\def\vertexsize{0.1}  
\fill (0,0) circle (\vertexsize);
\draw (\xspacing,0) ellipse (0.3 and 0.8); 
\foreach \y in {-0.5,0,0.5}
{
  \fill (\xspacing,\y) circle (\vertexsize);
  \draw (0,0)--(\xspacing,\y);
  \foreach \dy in {0.1,0,-0.1}
  \draw (\xspacing,\y) -- +($(0.37,\dy)+0.1*(0,\y)$);
}
\node at ($0.5*(\xspacing,0)+0.5*(\lastxspacing,0)+(0.1,0)$) {$\cdots$};
\draw (\lastxspacing,0) ellipse (0.3 and 1.1);
\node at (\lastxspacing,0) {\raisebox{1.25ex}{$\vdots$}}; 
\foreach \y in {-0.9,-0.6,0.6,0.9}
{
  \fill (\lastxspacing,\y) circle (\vertexsize);
}
\node at (0.0, \ylabel) {$\Lr_0$}; 
\node at (\xspacing, \ylabel) {$\Lr_1$}; 
\node at (\lastxspacing, \ylabel) {$\Lr_k$}; 
\end{tikzpicture}
\end{center}

Let
\[
  R_m\eqdef \frac{(2\ell)^m}{m+1}\binom{2m}{m}t^m.
\]
Note that $\frac{1}{m+1}\binom{2m}{m}$ is the $m$'th Catalan number.
We call a pair of layers $(\Lr_i,\Lr_j)$ with $i<j$ \emph{regular} if for every pair
of vertices $(v_i,v_j)\in \Lr_i\times \Lr_j$ the number of linear paths from~$v_i$ to~$v_j$
is $P(v_i,v_j)\leq R_{j-i-1}$.

We start the exploration process by picking a root vertex $r$, and setting $\Lr_0=\{r\}$ and $\Lr_1=N(r)$.
At the $k$th stage the sets $B_1,B_2,\dotsc,B_{k-1},\Lr_0,\Lr_1,\dotsc,\Lr_k$ satisfy the following properties:
\begin{enumerate}[label=P\arabic*.,ref=P\arabic*]
\item \label{prop:root} The root is preserved: $\Lr_0=\{r\}$.
\item \label{prop:orphans} No orphans: every vertex of $\Lr_i$ for $i=1,2,\dotsc,k$ has at least one parent.
\item \label{prop:tree} The explored part is tree-like: every pair of layers $(\Lr_i,\Lr_j)$
with $0\leq i<j<k$ is regular. (Note that pairs of the form $(\Lr_i,\Lr_k)$ might be irregular.) 
\item \label{prop:bad} Bad sets are small: $\abs{B_j}\leq \tau_j d^{j-1}$ for all $1\leq j< k$, where $\tau_j\eqdef 2\ell t\sum_{i=0}^{j-1}(i+1) \Delta^i$. 
\item \label{prop:branching} The `tree' is growing: there are at least $d^{k-1}(d-\eta_k)$ linear paths from root $r$
to layer~$\Lr_k$, where $\eta_k\eqdef\nobreak \sum_{i=0}^{k-2} \bigl((\Delta+1)R_i+2(i+1)\ell t\Delta ^i+\tau_i\bigr)$.
\item \label{prop:unexplored} There are many children: each vertex in $\Lr_k$ has at least $d$ neighbors in $\Lr_{k-1}\cup B_{k-1}\cup U$,
and each vertex in $U$ has at least $d$ neighbors in $\Lr_k\cup U$.
\end{enumerate}

The main step in going from the $k$th stage to the $(k+1)$st is to make \ref{prop:tree}
hold with $j=k$. 
To that end, we rely on the following lemma showing that only a few $v_k\in \Lr_k$ are in pairs $(v_i,v_k)$
that violate \ref{prop:tree}.

\begin{lemma}[Proof is in Section~\ref{sec:embedding}]\label{lem:fewexceptions}
Let $B'\eqdef \{ v_k\in\Lr_k : \exists i<k\ \exists v_i\in \Lr_i\ P(v_i,v_k)>R_{k-i-1}\}$.
Then $P(r,B')\leq 2k \ell t(\Delta d)^{k-1}$.
\end{lemma}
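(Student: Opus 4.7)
The plan is to bound $P(r,B')$ by showing that every linear path ending in $B'$ can be charged to a configuration that would otherwise yield a forbidden $\Theta_{\ell,t}$.

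First, for each $v_k\in B'$ I would fix a \emph{canonical witness}: a minimal layer index $i=i(v_k)$ together with a vertex $v_i=v_i(v_k)\in\Lr_i$ satisfying $P(v_i,v_k)>R_{k-i-1}$. This partitions $B'=\bigcup_{i=0}^{k-1}B'_i$, and since summing over~$i$ absorbs the factor~$k$ in the statement, it suffices to prove $P(r,B'_i)\le 2\ell t(\Delta d)^{k-1}$ for each fixed~$i$.

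For each such~$i$ I would combine two ingredients. The first is a Catalan-type selection lemma: whenever $P(v_i,v_k)>R_{k-i-1}$ in a layered graph whose intermediate pairs are already regular by~\ref{prop:tree}, one can extract $t$ internally vertex-disjoint linear paths from $v_i$ to $v_k$ of length~$k-i$. The Catalan number $\frac{1}{m+1}\binom{2m}{m}$ in the definition of $R_m$ should appear naturally because greedily extracting such paths produces a plane-tree structure on the $k-i-1$ internal levels, and the weighting $(2\ell)^m t^m$ records at most~$2\ell t$ forbidden continuations per branching vertex. The second ingredient is a completion to length~$\ell$: using the rich unexplored neighborhoods of $v_k$ guaranteed by~\ref{prop:unexplored}, together with the maximum-degree control imported at the start of Section~\ref{sec:general}, one attaches compatible extensions of length $\ell-(k-i)$ that lift the $t$ internally disjoint paths into the unexplored set~$U$ and terminate at a common vertex, thereby producing a genuine $\Theta_{\ell,t}$.

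The main obstacle I anticipate is exactly this completion step: the $t$ extensions in~$U$ must converge at a single vertex without destroying internal disjointness, which is why Section~\ref{sec:embedding} needs the maximum-degree bound $\Delta d$ and not just minimum-degree information. Granting this construction, it follows that for each $v_{k-1}\in\Lr_{k-1}$ the number of neighbors of $v_{k-1}$ in $B'_i$ is at most~$2\ell t$, for otherwise the above procedure builds a forbidden $\Theta_{\ell,t}$. Combining this with the trivial bound $P(r,\Lr_{k-1})\le(\Delta d)^{k-1}$ coming from the maximum degree, I would obtain
\[
P(r,B'_i)=\sum_{v_{k-1}\in\Lr_{k-1}}P(r,v_{k-1})\cdot\abs{\Nr(v_{k-1})\cap B'_i}\le 2\ell t(\Delta d)^{k-1},
\]
and summing over the $k$ possible values of~$i$ yields $P(r,B')\le 2k\ell t(\Delta d)^{k-1}$, as required.
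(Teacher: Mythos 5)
Your decomposition of $B'$ by witness layer and your Catalan-type extraction of $\ell t$ (or $t$) pairwise internally disjoint linear paths are indeed close to what the paper does, but the step you yourself flag as the main obstacle is a genuine gap, and it is not a technicality: making the $t$ extensions of length $\ell-(k-i)$ ``terminate at a common vertex'' inside the unexplored set $U$ is essentially the original problem in disguise. Property~\ref{prop:unexplored} only gives minimum degree $d$ in $U$, and no minimum/maximum degree information lets you force $t$ vertex-disjoint paths to reconverge at a prescribed single vertex in a $\Theta_{\ell,t}$-free graph -- if it did, one could build theta graphs directly. Note also that the cheap fix of appending one common tail of length $\ell-(k-i)$ to the shared endpoint $v_k$ fails, since the resulting $t$ paths would share all tail vertices and would not be internally disjoint. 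The paper's resolution (Lemma~\ref{lem:embedding}) is structurally different: it never goes into $U$. It works with the \emph{whole} set $B'(v_{i-1})$ of bad descendants of one vertex $v_{i-1}$ at once; the hypotheses \eqref{eq:Alower}--\eqref{eq:Blower}, after a deletion process on the family of linear paths, produce a bipartite graph between a subset $S\subset \Lr_{k-1}$ and a surviving subset of the bad vertices whose minimum degree is at least $\ell t$. Inside this dense bipartite graph one can greedily embed the subdivided star (the reconvergence vertex $u$ lies in $S\cup B'$, zig-zagging between layers $k-1$ and $k$), and then each branch is joined back to $A=\Nr(v_{i-1})$ using the $\ell t$ disjoint linear paths available at each bad endpoint; adding $v_{i-1}$ completes $\Theta_{\ell,t}$. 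The richness needed for reconvergence thus comes from having \emph{many} bad vertices each carrying many paths, not from unexplored neighborhoods.

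Two further points in your counting do not survive scrutiny. First, your claim that each $v_{k-1}\in\Lr_{k-1}$ has at most $2\ell t$ neighbors in $B'_i$ does not follow from your construction even if the completion step worked: distinct bad neighbors of $v_{k-1}$ may have distinct witnesses $v_i$, and a theta graph requires $t$ paths between one fixed pair of endpoints, so an abundance of bad neighbors with scattered witnesses creates nothing forbidden. The paper instead bounds the aggregate path count $P\bigl(\Nr(v_{i-1}),B'(v_{i-1})\bigr)\leq 2\dgr(v_{i-1})\,\ell t(\Delta d)^{k-i-1}$ by applying Lemma~\ref{lem:embedding} in contrapositive (condition \eqref{eq:Blower} holds by definition of badness, so \eqref{eq:Alower} must fail), and then sums weighted by $P(r,v_{i-1})$. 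Second, your ``minimal witness index'' decomposition does not secure the almost-regularity that the embedding argument needs: one must know that all pairs $(\Lr_{i'},\Lr_k)$ with $i'\geq i$ are regular on the relevant vertex set, which is why the paper defines $B'_{k-1},\dotsc,B'_1$ in decreasing order of index and removes $B'_{i+1}\cup\dotsb\cup B'_{k-1}$ from $\Lr_k$ before treating $B'_i$; minimality of the witness index for a single vertex gives no such guarantee.
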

Assuming the lemma, we show next how to go from stage $k$ to stage $k+1$, for $k<\ell$.

Because of \ref{prop:orphans}, $P(r,v)\geq \dgl(v)$ and so $\dgl(v_k)\leq R_{k-1}$ 
for every $v_k\in \Lr_k\setminus B'$. Since degree of every vertex of $\Lr_k$ is at least $d$, by \ref{prop:unexplored}
this implies that every vertex in $\Lr_k\setminus B'$ has at least $d-R_{k-1}$ neighbors in $U\cup B_{k-1}$.
Let $B''$ consist of those vertices in $\Lr_k\setminus B'$ that have at least $\Delta R_{k-1}$
neighbors in~$B_{k-1}$. Note that
\[
  \Delta R_{k-1} \abs{B''}\leq d\Delta \abs{B_{k-1}},
\]
and hence $\abs{B''}\leq d \abs{B_{k-1}}\leq \frac{\tau_{k-1}}{R_{k-1}}d^{k-1}$.

Let $\Lr_{k+1}$ be all vertices in $U$ that are adjacent to some vertex in $\Lr_k\setminus (B'\cup B'')$,
replace $\Lr_k$ by $\Lr_k\setminus (B'\cup B'')$, and set $B_k\eqdef B'\cup B''$. That way, each linear path
from $r$ to $\Lr_k$ can be extended to a path to $\Lr_{k+1}$ in at least $d-R_{k-1}-\Delta R_{k-1}=d-(\Delta+1)R_{k-1}$ ways. 
So, since the number of linear paths from $r$ to the new $\Lr_k$ is at least
\begin{align*}
  d^{k-1}(d-\eta_k)-P(r,B')-P(r,B'')&\geq d^{k-1}(d-\eta_k-2k\ell t \Delta^{k-1})-R_{k-1}\abs{B''}\\
                                   &\geq d^{k-1}(d-\eta_k-2k\ell t \Delta^{k-1}-\tau_{k-1}),
\end{align*}
it follows that the number of linear paths from $r$ to $\Lr_{k+1}$ is at least
\begin{align*}
  \bigl(d-(\Delta+1)R_{k-1}\bigr)P(r,\Lr_k)&\geq d^k\bigl(d-\eta_k-2k\ell t \Delta^{k-1}-\tau_{k-1}-(\Delta+1)R_{k-1}\bigr)\\
                                          &=d^k(d-\eta_{k+1}) 
\end{align*}
This shows that \ref{prop:branching} holds at stage~$k+1$. 
Since \ref{prop:orphans} held at stage $k$, it follows that $\abs{B'}\leq P(r,B')$, implying
\[
  \abs{B_k}=\abs{B'}+\abs{B''}\leq 2k \ell t (\Delta d)^{k-1}+\tau_{k-1}d^{k-1}=\tau_k d^{k-1},
\]
and so Property~\ref{prop:bad} holds at stage~$k+1$.
Property \ref{prop:unexplored} holds at stage $k+1$ because it held at stage $k$ and the graph is bipartite. The other properties are immediate.

At $\ell$'th stage, the number of linear paths from $r$ to $\Lr_{\ell}$ is at least $d^{\ell-1}(d-\eta_{\ell})=d^{\ell}\bigl(1+o(1)\bigr)$.
On the other hand, it is at most $\abs{\Lr_{\ell}}R_{\ell-1}\leq (8\ell t)^{\ell-1}n$.  The result then follows.

\section{Embedding \texorpdfstring{$\Theta_{\ell,t}$}{Theta\textunderscore l,t}}\label{sec:embedding}
In this section we prove Lemma~\ref{lem:fewexceptions} that controls the number of linear
paths in a $\Theta_{\ell,t}$-free graph. For that we show that if there are many
linear paths from some vertex $v$ to its descendants, then we can embed
a subdivision of a star so that its leaves are mapped to the children of $v$.
Adding vertex $v$ to the subdivision of the star yields a copy of $\Theta_{\ell,t}$.

The standard method for embedding trees is to find a substructure of large `minimum degree' (in a suitable sense),
and then embed vertex-by-vertex avoiding already-embedded vertices. For us the relevant
notion of a degree is the number of linear paths. 

\begin{definition}
A pair of layers $(\Lr_i,\Lr_j)$ with $i<j$ is \emph{almost-regular} if every pair of layers $(L_{i'},L_{j'})$,
with $i\leq i'<j'\leq j$ and $(i',j')\neq (i,j)$, is regular. 
\end{definition}
\begin{lemma}\label{lem:embedding}
Suppose $i<k$ and pair $(\Lr_{i-1},\Lr_k)$ is almost-regular, and we are given a vertex $v_{i-1}\in \Lr_{i-1}$ and subsets $A\subset \Nr(v_{i-1})$,
$B\subset \Lr_k$. 
Suppose that the number of linear paths between $A$ and $B$ satisfies
\begin{align}
P(A,B)/\abs{A}&>2\ell t (\Delta d)^{k-i-1},\label{eq:Alower}\\
P(A,B)/\abs{B}&> R_{k-i}.\label{eq:Blower}
\end{align}
Then~$G$ contains $\Theta_{\ell,r}$.
\end{lemma}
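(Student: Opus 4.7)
I would aim to embed $\Theta_{\ell,t}$ with endpoints $v_{i-1}$ and a carefully chosen center $c\in B$, joined by $t$ internally vertex-disjoint paths of the form $v_{i-1},\,a_j,\,\dotsc,\,c$, with the $a_j$ distinct vertices of $A$ and each tail a linear path from $a_j$ to $c$. The plan has three steps: a pigeonhole/cleaning step, a greedy embedding, and a Catalan-style telescoping of the union-bound count.

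First I would \emph{clean}: iteratively discard any $a\in A$ with $P(a,B)\le 2\ell t(\Delta d)^{k-i-1}$ and any $b\in B$ with $P(A,b)\le R_{k-i}$, updating after each removal. Charging each destroyed path to the first endpoint discarded, hypotheses \eqref{eq:Alower} and \eqref{eq:Blower} together ensure that the cleaning procedure cannot wipe out all of $P(A,B)$, so one is left with nonempty subsets $A'\subset A$ and $B'\subset B$ in which every vertex meets the corresponding minimum-path-degree bound. Fix any $c\in B'$, so that $P(A',c)>R_{k-i}$.

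Next I would greedily build the $t$ paths. After $s-1$ paths have been chosen, a candidate linear path from $A'$ to $c$ is \emph{forbidden} if it ends at a previously used vertex of $A'$ or passes through a previously used internal vertex in some $L_j$ with $i<j<k$. Forbidden paths of the first type are bounded by $(s-1)R_{k-i-1}$ using regularity of the sub-pair $(L_i,L_k)$. For the second type, the key observation is that every linear path from some $a\in A'$ to an intermediate $w\in L_j$ extends, via the edge $v_{i-1}a$, to a linear path $v_{i-1}\to w$; hence almost-regularity of $(L_{i-1},L_k)$ yields the sharp bound $P(A',w)\le P(v_{i-1},w)\le R_{j-i}$. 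Combined with $P(w,c)\le R_{k-j-1}$ from regularity of $(L_j,L_k)$, the total contribution from used intermediates is at most $(s-1)\sum_{j=i+1}^{k-1}R_{j-i}R_{k-j-1}$.

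Writing $R_m=(2\ell t)^m C_m$ with $C_m$ the $m$-th Catalan number, the recurrence $C_{m+1}=\sum_{p=0}^m C_p C_{m-p}$ gives $\sum_{p=0}^m R_p R_{m-p}=R_{m+1}/(2\ell t)$. With $m=k-i-1$ the intermediate sum telescopes to $R_{k-i}/(2\ell t)-R_{k-i-1}$, and the two forbidden-path contributions combine to at most $(s-1)R_{k-i}/(2\ell t)\le(t-1)R_{k-i}/(2\ell t)<R_{k-i}/(2\ell)$, strictly less than $P(A',c)>R_{k-i}$. Hence an unused path is always available; after $t$ steps I adjoin $v_{i-1}$ together with the edges $v_{i-1}a_j$ to complete $\Theta_{\ell,t}$. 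The delicate step, and the main obstacle I anticipate, is the bound $P(A',w)\le R_{j-i}$: the naive union estimate $|A'|\cdot R_{j-i-1}$ is useless once $|A'|$ is comparable to $d$, and only the extension through $v_{i-1}$---precisely the role of the almost-regularity hypothesis---gives a bound that is compatible with the Catalan structure of $R_m$ and causes the intermediate-layer sum to telescope exactly into $R_{k-i}$.
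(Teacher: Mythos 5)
There is a genuine gap, and it is structural: the copies of $\Theta$ that your plan produces have paths of the wrong length. Each of your $t$ paths $v_{i-1}\,a_j\,\dotsb\,c$ consists of the edge $v_{i-1}a_j$ followed by a linear path from $\Lr_i$ to $\Lr_k$, so it has length $k-i+1$, not $\ell$. The lemma is invoked (in the proof of Lemma~\ref{lem:fewexceptions}) for every $1\le i<k\le \ell$, so in general $k-i+1<\ell$; a copy of $\Theta_{k-i+1,t}$ gives no contradiction with $\Theta_{\ell,t}$-freeness (short thetas, e.g.\ $K_{2,t}=\Theta_{2,t}$, are not forbidden). This is exactly the difficulty the paper's proof is organized around: after the cleaning it establishes two facts --- (a) every surviving $b\in B'$ has $\ell t$ linear paths to $A$ that are disjoint except at $b$ (this is essentially your Catalan/greedy count, carried out per vertex $b$ rather than for a single center), and (b) in the bipartite graph $H$ between $B'$ and the set $S$ of $\Lr_{k-1}$-vertices lying on surviving paths, every vertex has degree at least $\ell t$, which is precisely what the prefix-cleaning operation (discarding prefixes with fewer than $\ell t$ extensions) is for. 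It then embeds inside $S\cup B'$ a subdivided star with $t$ vertex-disjoint arms of length $\ell-k+i-1$, and only afterwards attaches to each arm-endpoint $b_j$ one of its $\ell t$ disjoint linear paths back to $A$, plus the vertex $v_{i-1}$, so that every theta-path has length $(\ell-k+i-1)+(k-i)+1=\ell$. Your proposal contains no length-padding device of this kind, so it only proves the lemma in the single case $i=k-\ell+1$.

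Two smaller points. First, your cleaning step uses the full thresholds $2\ell t(\Delta d)^{k-i-1}$ and $R_{k-i}$; then the two removal types together may destroy up to (nearly) $2P(A,B)$ paths, so survival of a nonempty core is not guaranteed --- one must clean against half the average on each side, as the paper does with $R_{k-i}/2$ and with $\ell t$ against the hypothesis $2\ell t(\Delta d)^{k-i-1}$. (As written, your greedy step never actually uses the $A$-side cleaning; a plain average over $B$ already yields some $c$ with $P(A,c)>R_{k-i}$.) Second, your telescoping bound, including the observation that $P(A',w)\le P(v_{i-1},w)\le R_{j-i}$ via the edge through $v_{i-1}$ and the Catalan convolution giving $\sum_{u+v=k-i-1}R_uR_v=R_{k-i}/2\ell t$, is correct and coincides with the paper's Part~2 computation; the missing ingredient is not this count but the embedding of the star subdivision that corrects the path lengths.
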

\begin{proof} 
The proof naturally breaks into three parts: finding a substructure of a large minimum degree,
using that substructure to locate many disjoint paths, and then joining these paths to form a copy of $\Theta_{\ell,t}$.

\paragraph{Part 1 (large minimum degree substructure):}
We will select a subset $B'\subset B$ that is well connected by linear paths to the preceding layer~$\Lr_{k-1}$.
We use a modification of the standard proof that a graph of average degree~$2d$ contains a subgraph
of minimum degree~$d$.

At start, set $B'=B$. To each pair $(a,b)\in A\times B'$ we associate a set $\mathcal{P}(a,b)$
of linear paths between $a$ and~$b$. At start, $\mathcal{P}(a,b)$ is the set of all linear paths from $a$ to $b$.
For brevity we use notations $\mathcal{P}(\cdot,b)\eqdef \bigcup_{a\in A} \mathcal{P}(a,b)$, $\mathcal{P}(a,\cdot)\eqdef \bigcup_{b\in B'} \mathcal{P}(a,b)$,
and similarly for $\mathcal{P}(\cdot,\cdot)$.

Perform the following two operations, for as long as any of them is possible to perform:
\begin{enumerate}
\item \label{op:b} if $\abs{\mathcal{P}(\cdot,b)}\leq R_{k-i}/2$ for some $b\in B'$, then remove vertex $b$ from $B'$,
\item \label{op:c} if some linear path $a v_{i+1} v_{i+2}\dotsb v_{k-1}$ is a prefix of fewer than $\ell t$ paths in $\mathcal{P}(a,\cdot)$,
remove all these paths from respective $\mathcal{P}(a,b)$'s.
\end{enumerate}

Since each step decreases the size of $\mathcal{P}(\cdot,\cdot)$, the process terminates. 

Since operation~\ref{op:b} is performed at most $\abs{B}$ times, the operation decreases the size of $\mathcal{P}(\cdot,\cdot)$ by no more
than $\abs{B} R_{k-i}/2 < P(A,B)/2$.

Since each vertex has degree at most $\Delta d$, operation~\ref{op:c} is performed on at most
$(\Delta d)^{k-i-1} \abs{A}$ linear paths terminating in the layer~$\Lr_{k-1}$. Therefore, the operation decreases $\abs{\mathcal{P}(\cdot,\cdot)}$ by less
than 
\[
  (\Delta d)^{k-i-1} \abs{A} \cdot \ell t < P(A,B)/2.
\]

So, the total number of edges removed by the two operations is less than $P(A,B)$, and so $\mathcal{P}(\cdot,\cdot)$ is
non-empty when the process terminates. Therefore, $B'$ is non-empty as well. \medskip

\paragraph{Part 2 (many disjoint paths from  vertices of $B'$):}
Next we use the obtained set $B'$ and $\mathcal{P}(\cdot,\cdot)$ to embed~$\Theta_{\ell,t}$. 
We start by proving that, for every vertex $b\in B'$, there are $\ell t$ linear
paths in $\mathcal{P}(\cdot,b)$ that are vertex-disjoint apart from sharing vertex $b$ itself. We will
pick these paths one-by-one subject to the constraint of being vertex-disjoint.

Indeed, consider any linear path $v_iv_{i+1}\dotsc v_{k-1}b\in \mathcal{P}(\cdot,b)$.
Because $(\Lr_{i-1},\Lr_k)$ is almost-regular
the number of paths in $\mathcal{P}(\cdot,\cdot)$ that intersect $\{v_i,v_{i+1},\dotsc,v_{k-1}\}$ 
is at most 
\[
  \sum_{j=i}^{k-1} P(v_{i-1},v_j) P(v_j,v_k)\leq \sum_{j=i}^{k-1} R_{j-i}\cdot R_{k-j-1}=\sum_{u+v=k-i-1} R_u R_v=  \frac{1}{2\ell t} R_{k-i},
\]
where the last equality relies on the convolution identity for the Catalan numbers.

From $\abs{\mathcal{P}(\cdot,b)}>R_{k-i}/2$ it follows that as long we have picked fewer than $\ell t$ paths, there is
another path in $\mathcal{P}(\cdot,b)$ that is disjoint from the already-picked.

\paragraph{Part 3 (embedding):}
Let 
\[
  S\eqdef \{ v_{k-1}\in \Lr_{k-1} : v_{k-1} \text{ is on some path in }\mathcal{P}(\cdot,\cdot)\}.
\]

Consider the subgraph $H$ of $G$ that is induced by $S\cup B'$. This is a bipartite graph with parts $S$ and $B'$. 
The vertex-disjoint paths found in the previous step show that degree of each vertex in $B'$ is at least $\ell t$.
We claim that vertices of $S$ are also of degree at least~$\ell t$. Indeed, let $s\in S$ be arbitrary.
Then $\mathcal{P}(\cdot, \cdot)$ contains a linear path of the form $a v_{i+1} v_{i+2}\dotsb v_{k-2} s b$. Since it was not removed
by operation~\ref{op:c}, there are at least $\ell t$ linear paths having $a v_{i+1} v_{i+2}\dotsb v_{k-2} s$ as a prefix.
Therefore, $s$ is adjacent to at least $\ell t$ vertices of $B'$.

Because the minimum degree of~$H$ is at least~$\ell t$, it is possible to embed any rooted tree on at most $\ell t$ vertices into~$V(H)=S\cup B'$
with the root as any prescribed vertex of~$S\cup B'$.
In particular, we can find a vertex $u\in S\cup B'$ and $t$ vertex-disjoint paths from $u$ to $B'$ of length $\ell-k+i-1$ each.
Note that the choice of whether $u\in S$ or $u\in B'$ depends on the parity of $\ell-k+i-1$. 

Let $b_1,\dotsb,b_t\in B'$ be the endpoints of these paths, and let $T$ be all the vertices in the union of the paths. 
Since $\abs{T}<\ell t$, at least one of the $\ell t$ vertex-disjoint paths from $b_1$ to $A$ misses $T$. We
then join this path to~$b_1$. We can extend paths ending at $b_2,b_3,\dotsc,b_t$ in turn in a similar way.
We obtain an embedding of $\Theta_{\ell,t}$ minus one vertex. Adding $v_{i-1}$ we obtain an embedding
of $\Theta_{\ell,t}$ into~$G$.
\end{proof}

We are now ready to prove Lemma~\ref{lem:fewexceptions} that controls the number of bad vertices.
\begin{proof}[Proof of Lemma~\ref{lem:fewexceptions}]
Inductively define sets $B'_{k-1},B'_{k-2},\dotsc,B'_1$ (in that order) by
\[
  B'_i\eqdef \{ v_k\in \Lr_k : \exists v_{i-1}\in \Lr_{i-1}\ \text{s.t.\ } P(v_{i-1},v_k)>R_{k-i}\}\setminus (B'_{i+1}\cup\dotsb\cup B'_{k-1}).
\]
Note that $B'=\bigcup_i B'_i$. We will prove that $P(r,B'_i)\leq 2\ell t(\Delta d)^{k-1}$, from which the lemma would follow.

Decompose $B'_i$ further into sets 
\[
  B'(v_{i-1})\eqdef \{ v_k\in \Lr_k : P(v_{i-1},v_k)>R_{k-i}\}\setminus (B'_{i+1}\cup\dotsb\cup B'_{k-1}).
\]
Clearly $B'_i=\bigcup_{v_{i-1}\in\Lr_{i-1}} B'(v_{i-1})$.

To start, observe that if we remove $B'_{i+1}\cup\dotsb\cup B'_{k-1}$ from $\Lr_k$ then the pair of layers $(\Lr_{i-1},\Lr_k)$ is almost-regular.
Therefore, for every $v_{i-1}\in \Lr_{i-1}$, since
\begin{align*}
  P\bigl(\Nr(v_{i-1}),B'(v_{i-1})\bigr)&>R_{k-i}\abs{B'(v_{i-1})},
\intertext{it follows from Lemma~\ref{lem:embedding} that}
  P\bigl(\Nr(v_{i-1}),B'(v_{i-1})\bigr)&\leq 2\dgr(v_{i-1}) \ell t (\Delta d)^{k-i-1}.
\end{align*}
In particular,
\begin{align*}
  P(r,B'_i)&=\sum_{v_{i-1}\in \Lr_{i-1}} P(r,v_{i-1})P(\Nr(v_{i-1}),B'(v_{i-1}))\leq P(r,\Lr_{i-1}) 2\ell t(\Delta d)^{k-i}\\
           &\leq 2\ell t(\Delta d)^{k-1}
\end{align*}
since degree of every vertex is at most $d\Delta$. Adding these over all $i$ completes the proof.
\end{proof}

\section{Lower bound for odd length paths}\label{sec:oddconstruction}
In this section we will construct graphs on $n$ vertices that do not contain a $\Theta_{\ell, t}$ with $\Omega_\ell\left( t^{1-1/\ell}n^{1+1/\ell}\right)$ edges when $\ell$ is odd, showing that Theorem~\ref{thm:main} has the correct dependence on $t$ for odd~$\ell$. 

We will use the random polynomial method \cite{BlagojevicBukhKarasev,Bukh}. 
Our construction is in two stages. First we use random polynomials to construct graphs with only few short cycles.
In the second stage we blow up the graph by replacing vertices by large independent sets. We will show that the resulting
graph is $\Theta_{\ell,t}$-free.

Let $q$ be a prime power and let $\mathcal{P}_{d}^s$ be the set of polynomials in $s$ variables of degree at most $d$ over $\F_q$. That is, $\mathcal{P}_d^s$ is the set of linear combinations over $\F_q$ of 
monomials $X_1^{a_1}\cdots X_s^{a_s}$ with $\sum_{i=1}^s a_i \leq d$.

We reserve the term \emph{random polynomial} to mean a polynomial chosen uniformly from $\mathcal{P}_d^s$. We note that a random polynomial can equivalently be obtained by choosing the coefficients of each monomial $X_1^{a_1}\cdots X_s^{a_s}$ uniformly and independently from~$\F_q$. In particular, because the constant term of a random polynomial is chosen uniformly from $\F_q$, it follows that 
\begin{equation}\label{eq:edgeprobability}
\Pr[f(x) = 0] = \frac{1}{q}
\end{equation}
for a random polynomial $f$ and any fixed $x\in \F_q^s$.

We now define a random graph model that we use in our constructions.
\begin{definition}[Random algebraic graphs]\label{def:randomgraphs}
Set $d\eqdef 2\ell^2$. Let $U$ and $V$ be disjoint copies of $\F_q^{\ell}$, and consider the following random bipartite graph with parts $U$ and $V$.
We pick $\ell-1$ independent random polynomials $f_1,\dotsc,f_{\ell-1}$ from $\mathcal{P}_d^{2\ell}$, and declare $uv$ to be an edge of $G$ if and only if
\[
  f_1(u,v) = f_2(u,v) = \cdots = f_{\ell-1}(u,v) = 0.
\]
We call the resulting graph a \emph{random algebraic graph}.
\end{definition}
Note that in this definition we fixed the degree and number of polynomials, to suit our particular application. More general random algebraic graphs
have been been used for instance in \cite{BukhConlon} .

Let $G$ be a random algebraic graph. For $T\in \mathbb{N}$, we say that a pair of vertices $x,y$ is \emph{$T$-bad} if there are at least $T$ distinct (but not necessarily edge-disjoint) paths of length at most $\ell$ between $x$ and~$y$.
Define $\mathcal{B}_T$ to be the set of $T$-bad pairs of vertices in $G$. 

\begin{proposition}\label{prop:fewpaths}[Case $h=1$ of Proposition \ref{prop:fewpaths2}]
There exists a constant $T=T(\ell)$ depending only on $\ell$ such that 
\[
\E\bigl[\abs{\mathcal{B}_T}\bigr] = O_\ell(1).
\]
\end{proposition}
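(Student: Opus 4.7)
My plan is to bound $\E[|\mathcal{B}_T|]$ via the standard subgraph-counting lemma for random algebraic graphs. First, one shows that for any bipartite graph $H$ with $v$ vertices and $e$ edges (respecting the bipartition $U \sqcup V$ of $G$), the expected number of labeled copies of $H$ in $G$ is $O_{v,\ell}(q^{\ell v - e(\ell-1)})$. The factor $q^{\ell v}$ counts labelings of $V(H)$ into $\F_q^{\ell}$, and $q^{-e(\ell-1)}$ is the probability that the $e(\ell-1)$ polynomial conditions (one per edge per polynomial $f_i$) all hold. The key input is that, for $d = 2\ell^2$, the point-evaluation functionals at any bounded collection of pairs in $\F_q^{2\ell}$ are linearly independent in $\mathcal{P}_d^{2\ell}$, so the evaluations are jointly uniform on $\F_q$ and the edge-events at distinct pairs are independent. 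This is exactly the hypothesis under which the subgraph-count lemma in \cite{Bukh, BukhConlon} applies.

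Next I would observe that every $(x,y) \in \mathcal{B}_T$ forces $G$ to contain (as a subgraph) the union $H$ of some $T$ distinct $(x,y)$-paths of length at most $\ell$. Such $H$ has at most $2+T(\ell-1)$ vertices and at most $T\ell$ edges, and hence ranges over a finite set of isomorphism types whose cardinality depends only on $T$ and $\ell$. A union bound over these types together with the subgraph-count lemma gives
\[
\E[|\mathcal{B}_T|] \;\leq\; C_{T,\ell}\, \max_{H \text{ admissible}} q^{\ell v(H) - e(H)(\ell-1)},
\]
where the $q^{2\ell}$ arising from summing over the $q^{2\ell}$ choices of $(x,y)$ is absorbed into the two distinguished endpoints already counted by $\ell v(H)$.

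The main obstacle — the heart of the argument — is to show that the exponent on the right is non-positive for every admissible $H$ once $T = T(\ell)$ is taken large enough. The desired structural claim is that an admissible $H$ satisfies $e(H)(\ell-1) \geq \ell v(H)$, i.e. $e/v \geq \ell/(\ell-1)$. This is delicate, because the extremal family of $T$ internally vertex-disjoint paths of length $\ell$ has $v = 2+T(\ell-1)$ and $e = T\ell$, which satisfies the bound with an $O(1)$ slack that does \emph{not} shrink with $T$. To handle this borderline, I would split witnesses into two classes. In the first class (``almost disjoint'') the paths are vertex-disjoint apart from a bounded amount of sharing; here the contribution per pair is $O(1)$, and a second-moment argument on the number of configurations — the variance calculation being essentially Poisson-like thanks to independence of disjoint edge-events — controls $\Pr[(x,y)\in \mathcal{B}_T]$. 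In the second class (``dense'') there is substantial internal sharing, and each shared internal vertex decreases the exponent $\ell v - e(\ell-1)$ by at least one, yielding geometric decay in $q$ so that the total is $O_\ell(1)$. Choosing $T$ polynomial in $\ell$ should suffice to make both contributions $O_\ell(1)$, which is the bound claimed in the proposition.
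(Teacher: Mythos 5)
There is a genuine gap, and it sits exactly where you flag ``the main obstacle'': the first-moment/subgraph-counting route cannot close it, for two concrete reasons. First, your counting lemma itself is not available at the scale you need. The probability $q^{-e(\ell-1)}$ for a witness graph $H$ requires the $(\ell-1)e$ polynomial evaluations to be jointly uniform, and the independence statement for random polynomials of fixed degree $d=2\ell^2$ (Lemma~\ref{lem:independence}) only applies to at most $d+1$ evaluation points; your witnesses have up to $e=T\ell\gg 2\ell^2$ edges once $T=T(\ell)$ is taken large, and for a degree-$d$ polynomial the values at that many points are genuinely constrained, so the claimed expectation bound is unjustified. This is precisely why the paper only ever computes the $2\ell$-th moment of the path count $S_I$ (so that at most $2\ell\cdot\ell=d$ edges are involved). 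Second, even granting the counting lemma, the exponent does not become non-positive: for $T$ internally disjoint paths of length $\ell$ one gets $\ell v-( \ell-1)e=\ell\bigl(2+T(\ell-1)\bigr)-(\ell-1)T\ell=2\ell>0$ \emph{independently of $T$}, i.e.\ the expected number of such witness configurations is $\Theta(q^{2\ell})$ --- comparable to the number of pairs $(x,y)$ --- so no union bound over witness types, and no splitting off of ``dense'' witnesses, can yield $O_\ell(1)$. Your fallback, a Poisson-like second-moment/concentration argument for the almost-disjoint class, also cannot work with constant $T$: the only moments you can control are of bounded order (again because $d$ is fixed), and $\E\bigl[\abs{S_I}^{2\ell}\bigr]=O_\ell(1)$ gives only $\Pr\bigl[\abs{S_I}\geq T\bigr]=O(T^{-2\ell})$ per pair, which summed over the $q^{2\ell}$ pairs is unbounded; genuinely Poisson tails would require moments of order growing with $q$, which the fixed-degree polynomials do not supply.

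The missing idea is the algebraic-geometric dichotomy, which is the heart of the paper's proof. For a fixed pair $(x,y)$, the set of paths of a given length is $W(\F_q)\setminus D(\F_q)$ for varieties $W$ (walks) and $D$ (degenerate walks) of complexity bounded in terms of $\ell$ only, so by Lemma~\ref{lem:smallorlarge} its size is either at most a constant $T_r(\ell)$ or at least $q/2$. There is no intermediate regime. Combining this with the $2\ell$-th moment bound $\E\bigl[\abs{S_I}^{2\ell}\bigr]=O_\ell(1)$ (obtained via Conlon's count of internal vertices plus the bounded-order independence lemma) and Markov gives $\Pr\bigl[\abs{S_I}>T_r\bigr]=\Pr\bigl[\abs{S_I}\geq q/2\bigr]=O_\ell(q^{-2\ell})$, and only this $q^{-2\ell}$ decay beats the $q^{2\ell}$ pairs to give $\E\bigl[\abs{\mathcal{B}_T}\bigr]=O_\ell(1)$. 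Without an input of Lang--Weil type, no purely combinatorial moment or subgraph-count argument with constant $T$ can prove the proposition.
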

The proof of Proposition~\ref{prop:fewpaths2} is similar to arguments in \cite{BukhConlon} and \cite{Conlon}, and we defer it to Section~\ref{sec:boring}. We use Proposition~\ref{prop:fewpaths} to make a graph with $\Omega\left(n^{1+1/\ell}\right)$ edges where each pair of vertices is joined by only a few short paths.

\begin{theorem}\label{thm:fewpaths}
There exists a constant $T$ such that, for all $n$ large enough, there is a bipartite graph on $n$ vertices with at least $\frac{1}{4}n^{1+1/\ell}$ edges and no $T$-bad pair.
\end{theorem}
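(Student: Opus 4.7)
The plan is to apply the deletion method to a random algebraic graph $G$ from Definition~\ref{def:randomgraphs}. Fix the constant $T=T(\ell)$ given by Proposition~\ref{prop:fewpaths}, and for $n$ large choose a prime power $q$ with $2q^\ell\le n$ and $q\ge (1-o(1))(n/2)^{1/\ell}$; such $q$ exists by Bertrand's postulate (for $\ell$ fixed and $n$ large). Take $G$ to be the random algebraic graph on $N\eqdef 2q^\ell\le n$ vertices. Equation~\eqref{eq:edgeprobability} and linearity of expectation give
\[
  \E\bigl[|E(G)|\bigr]=(q^\ell)^2\cdot q^{-(\ell-1)}=q^{\ell+1},
\]
while Proposition~\ref{prop:fewpaths} yields $\E[|\mathcal{B}_T|]=O_\ell(1)$.

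Next I would argue concentration of $|E(G)|$ using pairwise independence of the edge indicators. For any two distinct points of $\F_q^{2\ell}$, each random polynomial $f_i$ takes jointly uniform values in $\F_q^2$, so the indicators $X_{uv}$ and $X_{u'v'}$ are independent whenever $(u,v)\neq(u',v')$. Hence $\Var(|E(G)|)\le \E[|E(G)|]=q^{\ell+1}$, and Chebyshev's inequality gives $|E(G)|\ge(1-\veps)q^{\ell+1}$ with probability $1-o_q(1)$. Combined with Markov's inequality applied to $|\mathcal{B}_T|$, there exists an outcome of $G$ with $|E(G)|\ge (1-\veps)q^{\ell+1}$ and $|\mathcal{B}_T|\le C=C(\ell)$.

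The main obstacle is to destroy every $T$-bad pair while losing only $o(q^{\ell+1})$ edges. I would delete one endpoint from each bad pair; since vertex deletion cannot increase path counts, no new bad pair is created, and the cost is that vertex's degree. To control these degrees I would use the same pairwise independence argument to compute
\[
  \E\Bigl[\sum_{v}\deg(v)^2\Bigr]=O_\ell\bigl(q^{\ell+2}\bigr),
\]
which via Markov's inequality implies that with positive probability $\sum_{v:\deg(v)>Kq}\deg(v)\le \veps q^{\ell+1}$ for a sufficiently large constant $K=K(\ell)$. I would then clean up in two stages: first remove every vertex of degree exceeding $Kq$ (losing at most $\veps q^{\ell+1}$ edges), and then delete one endpoint from each remaining bad pair (now costing at most $2CKq=o(q^{\ell+1})$ edges). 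Finally, add isolated vertices to bring the total to exactly $n$. The resulting bipartite graph has at least $(1-2\veps)q^{\ell+1}\ge \tfrac14 n^{1+1/\ell}$ edges, since $q^{\ell+1}\ge (1-o(1))(n/2)^{1+1/\ell}$ and $2^{-1-1/\ell}>\tfrac14$ for $\ell\ge 2$, provided $\veps$ is taken small and $n$ large.
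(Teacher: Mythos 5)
Your proof follows the same deletion-method skeleton as the paper's: take the random algebraic graph of Definition~\ref{def:randomgraphs} with $q$ a prime power satisfying $2q^{\ell}\le n$, invoke Proposition~\ref{prop:fewpaths}, destroy the $T$-bad pairs, and check that at least $\frac{1}{4}n^{1+1/\ell}$ edges survive. Two remarks. First, your argument is more elaborate than necessary: the paper works purely in expectation, using $e(G')\ge e(G)-2n\abs{\mathcal{B}_T}$ and linearity to get $\E[e(G')]\ge q^{\ell+1}-O(n)$, so no Chebyshev concentration, no bound on $\sum_v \deg(v)^2$, and no degree truncation are needed; even within your own framework, once you have an outcome with $\abs{\mathcal{B}_T}\le C$, the trivial bound $\deg(v)\le q^{\ell}$ already makes the cost of deleting one endpoint per bad pair at most $Cq^{\ell}=o(q^{\ell+1})$, so the two-stage cleanup is superfluous (your pairwise-independence variance bound is correct, via Lemma~\ref{lem:independence} with $m=2$, and the union bound over the three events can be made to work, though you should state the Markov thresholds explicitly). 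Second, there is one genuine flaw in the write-up: Bertrand's postulate does not give a prime power $q\ge(1-o(1))(n/2)^{1/\ell}$; it only guarantees $q\ge\frac{1}{2}(n/2)^{1/\ell}$, which loses a factor of order $2^{\ell+1}$ in the edge count and fails to deliver the constant $\frac{1}{4}$ claimed in the statement. You need a prime in an interval of the form $[x,(1+o(1))x]$, which follows from the prime number theorem, or from the Baker--Harman--Pintz bound that the paper cites; with that substitution your argument goes through.
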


\begin{proof}
Let $q$ be the largest prime power with $2q^\ell \leq n$. Note that $2q^{\ell}\sim n$, as there is a prime between $x$ and $x+x^{0.525}$ for all large $x$ \cite{BakerHarmanPintz}. Let $G$ be a random algebraic graph
as in Definition~\ref{def:randomgraphs}. Let $T$ be the constant from Proposition~\ref{prop:fewpaths}. Remove
all $T$-bad pairs from $G$ to obtain a subgraph $G'$ of $G$. Note that for each pair in $\mathcal{B}_T$ which is removed from $G$, at most $2n$ edges are removed. 

Since $f_1,\ldots, f_{\ell-1}$ are chosen independently, \eqref{eq:edgeprobability} implies that the expected number of edges in $G$ is 
\[
q^{\ell}\cdot q^{\ell} \cdot \left(\frac{1}{q}\right)^{\ell-1} = q^{\ell+1}.
\]
Therefore by Proposition~\ref{prop:fewpaths}, we have 
\[
\E\bigl[e(G')\bigr] \geq q^{\ell+1} - 2n\E\bigl[\abs{\mathcal{B}_T}\bigr] = q^{\ell+1} - O(n).
\]
Since $2q^\ell \sim n$, for $n$ large enough we have $\E\bigl[e(G')\bigr] \geq \frac{1}{4}n^{1+1/\ell}$, and so a graph with the desired properties exists.
\end{proof}

We now construct our $\Theta_{\ell, t}$-free graphs. Given a graph $G'$, an \emph{$m$-blowup of $G'$} is obtained by replacing every vertex of $G'$ with an independent set of size $m$ and replacing each edge of $G'$ with a copy of $K_{m,m}$. Note that an $m$-blowup of $G'$ has $m^2e(G')$ edges. If $G$ is a blowup of $G'$, for $u\in V(G)$ and $v\in V(G')$, we say that $v$ is a {\em supervertex} of $u$ if $u$ is in the independent set which replaced $v$.

\begin{proof}[Proof of Theorem~\ref{thm:oddconstruction}]
Let $\ell\geq 3$ be odd, and let $T$ be as above. With foresight, set $m\eqdef\lfloor\frac{t-1}{T}\rfloor$. Let $G'$ be the graph on $\frac{n}{m}$ vertices whose existence is guaranteed by Theorem~\ref{thm:fewpaths}. So $G'$ has at least $\frac{1}{4}\left(\frac{n}{m}\right)^{1+1/\ell}$ edges and no $T$-bad pair. Let $G$ be an $m$-blowup of $G'$. To show that $G$ is $\Theta_{\ell, t}$-free, let $x$ and $y$ be vertices in $G$ and let 
\begin{align*}
P_1=xu_1^{1}&\cdots u_{\ell-1}^1y\\
\vdots &\\
P_R=xu_1^R&\cdots u_{\ell-1}^Ry
\end{align*}
be $R$ internally disjoint paths of length~$\ell$ from $x$ to~$y$. Since $\ell$ is odd and $G'$ is bipartite, $x$ and $y$ have distinct supervertices in $G'$, call them $x'$ and $y'$. For $1\leq i\leq \ell-1$ and $1\leq j\leq R$, let $v_i^j \in V(G')$ be the supervertex of $u_i^j \in V(G)$. Now consider the multiset
\begin{align*}
P_1'=x'v_1^{1}&\cdots v_{\ell-1}^1y'\\
\vdots &\\
P_R'=x'v_1^R&\cdots v_{\ell-1}^Ry'.
\end{align*}
This is a multiset of $R$ not necessarily disjoint or distinct walks of length $\ell$ from $x'$ to $y'$ in $G'$. Removing cycles from these walks, we obtain a multiset of $R$ paths of length at most $\ell$ between $x'$ and $y'$ in $G'$. Although these paths are not necessarily disjoint or distinct, since $G$ is an $m$-blowup of $G'$ and since $P_1,\dotsc,P_R$ are internally disjoint, each vertex besides $x'$ and $y'$ may appear in the multiset of $G'$-paths at most $m$ times. In particular, each distinct $G'$-path may appear at most $m$ times. Since $G'$ has at most $T$ paths of length at most $\ell$ between $x'$ and $y'$, we have that 
\[
  R \leq T m<t
\]
by the choice of $m$.

So, $G$ is a graph on $n$ vertices with no $\Theta_{\ell, t}$ and at least 
\[
\frac{1}{4}\left(\frac{n}{m}\right)^{1+1/\ell}m^2 = \frac{1}{4}n^{1+1/\ell} m^{1-1/\ell} = \Omega_\ell \left(t^{1-1/\ell}n^{1+1/\ell}\right)
\]
edges.
\end{proof}

\section{Lower bound for even length paths}\label{sec:evenconstruction}
Let $h$ be a parameter to be chosen later. Let $G_1,\dotsc,G_h$ be $h$ independent random algebraic graphs with parts $U=V=\F_q^{\ell}$, chosen as in Definition~\ref{def:randomgraphs}.
Consider the multigraph $\overline{G}$ which is the union of all the~$G_i$'s.  Call a pair of vertices \emph{$T$-bad} if they are joined by at least
$T$ paths of length at most $\ell$ in $\overline{G}$.
By Proposition \ref{prop:fewpaths2} (proved in Section~\ref{sec:boring}) 
there are constants $T=T(\ell)$ and $C=C(\ell)$ such that the expected number of $Th^{\ell}$-bad  pairs is at most $Ch^{\ell}$.
Let $G$ be obtained from $\overline{G}$ by removing the multiple edges.

The expected number of edges in the multigraph $\overline{G}$ is $h\cdot q^{\ell+1} = h\left(\frac{n}{2}\right)^{1+1/\ell}$. Let $M$ be the number of multiple edges. Then 
\[
\E[M] \leq n^2 h^2 \left(q^{-\ell-1}\right)^2 = o(n).
\]
Remove from $G$ all $T h^{\ell}$-bad pairs of vertices. Doing this removes at most $2n$ edges per pair removed. 
The expected number of edges in the obtained graph is at least
\[
h\left(\frac{n}{2}\right)^{1+1/\ell} - 2C h^{\ell} n - o(n).
\]

Choosing $h=\left(\frac{t}{T}\right)^{1/\ell}$ shows that there is a $\Theta_{\ell, t}$-free graph with $\Omega_\ell\left(t^{1/\ell}n^{1+1/\ell}\right)$ edges and at most $n$ vertices.

\section{Analysis of the random algebraic construction}\label{sec:boring}
Here we prove the bound, whose proof we deferred, on the number of $T$-bad pairs. Recall that
$G_1,\dotsc,G_h$ are independent random algebraic graphs with parts $U = V = \F_q^\ell$, and $\overline{G}$ 
is the multigraph which is the union of the~$G_i$'s. As before, a pair of vertices is \emph{$T$-bad} if it is joined by
at least $T$ paths of length at most~$\ell$.

Let $\mathcal{B}_T$ be the set of all $T$-bad pairs in $\overline{G}$.
\begin{proposition}\label{prop:fewpaths2}
There exist constants $T=T(\ell)$ and $C=C(\ell)$ such that 
\[
\E\bigl[\abs{\mathcal{B}_{T h^{\ell}}}\bigr] \leq C h^{\ell}.
\]
\end{proposition}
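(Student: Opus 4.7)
The plan is to reduce the general $h$ case to a strengthening of the $h = 1$ case (Proposition~\ref{prop:fewpaths}) by coloring each walk with the sequence of graph-indices supplying its edges. For a walk of length $s \leq \ell$ from $x$ to $y$ in $\overline{G}$, let $I = (i_1,\dotsc,i_s)\in [h]^s$ record the index of the graph $G_{i_k}$ providing the $k$th edge, and write $N_s^I(x,y)$ for the number of such labeled walks. Since there are fewer than $\ell h^\ell$ choices of $(s, I)$ with $s \leq \ell$, any pair $(x,y)$ with $N(x,y)\geq Th^\ell$ must satisfy $N_s^I(x,y)\geq T/\ell$ for some $(s,I)$. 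Hence by a union bound it suffices to show that, for each fixed $(s,I)$,
\[
  \E\bigl[\abs{\{(x,y) : N_s^I(x,y)\geq T_0\}}\bigr] \;\leq\; C_0
\]
for constants $T_0 = T_0(\ell)$ and $C_0 = C_0(\ell)$ independent of $h$ and $(s,I)$.

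For fixed $(s,I)$, the labeled walks counted by $N_s^I(x,y)$ are exactly the $\F_q$-points of the affine variety
\[
  W(x,y) \;\eqdef\; \bigl\{(v_1,\dotsc,v_{s-1})\in (\F_q^\ell)^{s-1} : f_j^{(i_k)}(v_{k-1},v_k) = 0\text{ for all } j,k\bigr\},
\]
where $v_0 = x$, $v_s = y$, and $f_j^{(i)}$ are the polynomials defining $G_i$. This is structurally the same variety analyzed in the $h = 1$ case, the only difference being that the defining polynomials are drawn from independent copies indexed by the distinct values appearing in $I$. The random polynomial method of \cite{BukhConlon,Conlon} then yields the per-$(s,I)$ bound: for generic $(x,y)$ the variety $W(x,y)$ is zero-dimensional of degree bounded by a function of $\ell$ (hence has $O_\ell(1)$ points by a Bezout or Lang--Weil estimate), while the non-generic pairs on which $W(x,y)$ jumps in dimension form a subvariety of positive codimension in $(\F_q^\ell)^2$ whose $\F_q$-point count is $O_\ell(q^{2\ell-1}) = o(q^{2\ell})$. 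Setting $T \eqdef \ell T_0$ and summing over $(s,I)$ produces
\[
  \E\bigl[\abs{\mathcal{B}_{Th^\ell}}\bigr] \;\leq\; \ell h^\ell \cdot C_0 \;=\; O_\ell(h^\ell),
\]
as required.

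The main obstacle is ensuring that $C_0$ is genuinely uniform across all labels $I$, including those with repeated entries, since edges supplied by the same $G_i$ are constrained by a common set of random polynomials and are a priori correlated. The choice of degree $d = 2\ell^2$ is what allows me to ignore this subtlety: along any walk with $s \leq \ell$ distinct internal vertices, the $s$ ordered pairs $(v_{k-1},v_k) \in \F_q^{2\ell}$ are pairwise distinct, so a random polynomial of degree $d \gg \ell$ takes jointly uniform values at these pairs, and the probability that a fixed labeled walk is realized equals $q^{-s(\ell-1)}$ regardless of whether $I$ has repeated indices. The expected-count and variety-dimension calculations are therefore identical to those in the $h = 1$ case, so the arguments of \cite{BukhConlon,Conlon} transfer verbatim.
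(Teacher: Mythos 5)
Your first step --- decomposing the paths of length at most $\ell$ in $\overline{G}$ according to the type $I\in[h]^s$ of graphs supplying the edges, noting there are at most $\ell h^\ell$ types, and reducing to a per-type bound with constants independent of $h$ --- is exactly the paper's reduction, and your observation that the degree $d=2\ell^2$ makes the vanishing events jointly uniform even for repeated indices in $I$ is also correct. The gap is in the per-type bound itself. You assert that for each fixed $(s,I)$ the expected number of pairs with at least $T_0$ walks of type $I$ is a constant $C_0$, justified by a claim that $W(x,y)$ is generically zero-dimensional with $O_\ell(1)$ points while the exceptional pairs lie on a positive-codimension subvariety with $O_\ell(q^{2\ell-1})$ points. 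Even taken at face value, this does not give the bound you need: a locus of size $O(q^{2\ell-1})$ is enormous (recall $q^\ell\sim n/2$), so it would yield $O(q^{2\ell-1})$ bad pairs per type rather than $O(1)$, and the final estimate $\E[\abs{\mathcal{B}_{Th^\ell}}]\leq Ch^\ell$ would not follow. The correct mechanism, which your sketch omits entirely, is probabilistic over the random polynomials with the pair $(x,y)$ \emph{fixed}: one bounds the $2\ell$-th moment $\E[\abs{S_I}^{2\ell}]=O_\ell(1)$ (via Conlon's count of vertex spans of unions of $2\ell$ paths plus the joint-uniformity lemma), invokes the Bukh--Conlon dichotomy that $\abs{S_I}$ is either at most a constant or at least $q\bigl(1-O(q^{-1/2})\bigr)$, and applies Markov to get $\Pr[\abs{S_I}>T_r]=O(q^{-2\ell})$; this probability exactly cancels the $q^{2\ell}$ pairs and produces the constant $C_0$.

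A second, related problem is that you count \emph{walks} rather than paths and never excise the degenerate ones. Your variety $W(x,y)$ contains walks that revisit vertices (e.g.\ $x\to v\to x\to y$ whenever $xy$ is an edge), and for such pairs the fiber typically has dimension at least one, so your ``generically zero-dimensional'' claim is false for the walk variety, and the number of pairs with many type-$I$ walks is genuinely large (of order the number of edges, about $q^{\ell+1}$ in expectation), not $O(1)$. The paper handles this by intersecting with the complement of the diagonal variety $D\eqdef\bigcup_{a<b}\{u_a=u_b\}$, so that $S_I=W(\F_q)\setminus D(\F_q)$ counts only genuine paths, and by applying the dichotomy lemma to $W\setminus D$, whose complexity is still bounded. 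Without both the $W\setminus D$ restriction and the moment--Markov step, the per-type constant bound, and hence the proposition, is not established.
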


\begin{proof}[Proof of Proposition~\ref{prop:fewpaths2}]
Let $r\leq \ell$ and  $(i_1,\ldots, i_r) \in [h]^r$ be fixed. A path made of edges $e_1,\dotsc,e_r$ (in order) is \emph{of type $(i_1,\ldots, i_r)$}
if $e_j\in E(G_{i_j})$. For a type $I$, a pair of vertices $x,y$ is \emph{$(T,I)$-bad} if there are $T$ paths of type $I$ 
between $x$ and~$y$. We will show that there is a constant $T=T(\ell)$ such that, for each fixed type~$I$, the expected number of $(T/\ell,I)$-bad pairs is $O_\ell(1)$.
Since the total number of types is $\sum_{r\leq \ell} h^r\leq \ell h^r$, the proposition will follow by the linearity of expectation.

We will need the fact that if the degrees of random polynomials are large enough, then 
the values of these polynomials in a small set are independent. Specifically, because of the way we defined graphs $G_i$, 
we are interested in the probabilities that the polynomials vanish on a given set.

\begin{lemma}\label{lem:independence}[Lemma 2.3 in \cite{BukhConlon} and Lemma 2 in \cite{Conlon}]
Suppose that $q\geq \binom{m}{2}$ and $d\geq m-1$. Then if $f$ is a random polynomial from $\mathcal{P}_d^t$ and $x_1,\ldots, x_m$ are fixed distinct points in $\F_q^t$, then 
\[
\mathbb{P}\left[f(x_1) = \cdots = f(x_m) = 0\right] = \frac{1}{q^m}.
\]
\end{lemma}

We need to estimate the expected number of short paths between pairs of vertices. To this end, let $x$ and $y$ be fixed vertices in $G$, let $I=(i_1,\ldots, i_r)$ be fixed, and let $S_r$ be the set of paths of type $I$ between $x$ and~$y$. We use an argument of Conlon \cite{Conlon}, to estimate the $2\ell$'th moment of~$S_I$. 

The $|S_I|^{2\ell}$ counts ordered collections of $2\ell$ paths of type $I$ from $x$ to~$y$.
Let $P_{m,r}$ be all such ordered collections of paths in $K_{q^{\ell},q^{\ell}}$ whose union has exactly $m$ edges. 
Note that $m\leq 2\ell\cdot r\leq 2\ell^2\leq d$. Conlon showed \cite[p.5]{Conlon} that every collection in $P_{m,r}$ spans at least $(r-1)m/r$
vertices other than $x$ and $y$.

By Lemma~\ref{lem:independence} and independence between different $G_i$'s, the probability that a given collection in $P_{m,r}$ is contained in $\overline{G}$
is~$q^{-(\ell-1)m}$. From Conlon's bound on the number of internal vertices it follows that
\[
  \abs{P_{m,j}} \leq q^{\ell m(j-1)/j}.
\]
Therefore,
\[
\E\left[\abs{S_I}^{2\ell}\right] = \sum_{m=1}^{2\ell^2} \abs{P_{m,r}}q^{-(\ell-1)m} \leq \sum_{m=1}^{2\ell^2} q^{\ell m - \frac{\ell m}{r}} q^{-\ell m + m} \leq \sum_{m=1}^{2\ell^2} 1,
\]
where the last inequality uses $r\leq \ell$.

We next show that $|S_I|$ is either bounded or is of order at least~$q$. To do this, we must describe the paths as a points on appropriate varieties. We write $\overline{\F}_q$ for the algebraic closure of $\F_q$. A variety over $\overline{\F}_q$ is a set 
\[
W = \{x\in \overline{\F}_q^t: f_1(x) = \cdots = f_s(x) = 0\}
\]
where $f_1,\ldots,f_s\colon \overline{\F}_q^t \to \overline{\F}_q$ are polynomials. We say that $W$ is \emph{defined over $\F_q$} if the coefficients of the polynomials are in $\F_q$ and we let $W(\F_q) = W\cap \F_q$. We say $W$ has \emph{complexity at most $M$} if $s$, $t$, and the degree of each polynomial are at most~$M$. We need the following lemma of Bukh and Conlon \cite{BukhConlon}.
\begin{lemma}\label{lem:smallorlarge}[Lemma 2.7 in \cite{BukhConlon}]
Suppose $W$ and $D$  are varieties over $\overline{\F}_q$ of complexity at most $M$ which are defined over $\F_q$. Then one of the following holds:
\begin{itemize}
\item $|W(\F_q) \setminus D(\F_q)| \leq c_M$, where $c_M$ depends only on $M$, or
\item $|W(\F_q) \setminus D(\F_q)| \geq q\left(1 - O_M(q^{-1/2})\right)$.
\end{itemize}
\end{lemma}

Note that $S_I$ is a subset of a variety. Indeed, suppose $x\in U$ and $y\in V$ (if $r$ is odd) or $y\in U$ (if $r$ is even) be the two endpoints. 
Let 
\[
  W \eqdef \{ (u_0,\dotsc, u_r)\in (\F_q^\ell)^{r+1} : u_0=x,\ u_r=y,\  f_k^{i_1}(u_0, u_1) = \cdots = f_k^{i_r}(u_{r-1}, u_r)=0,\ 1\leq k\leq \ell-1\},
\]
where $f_k^i$ is the $k$'th random polynomial used to define the random graph~$G_i$.

The set $W(\F_q)$ is nothing but the set of \emph{walks} of type $I$ from $x$ to~$y$. To obtain $S_I$ we need to exclude the walks that are not paths.
To that end, define
\[
  D_{a,b} \eqdef W\cap \{(u_0,\dotsc, u_r): u_a = u_b\}\qquad\text{ for }0\leq a < b \leq r,
\]
and set
$  D\eqdef \bigcup_{a,b} D_{a,b}$,
which is a variety since the union of varieties is a variety. Furthermore, its complexity is bounded
since it is defined by polynomials that are products of polynomials defining $D_{a,b}$'s.

We then have that 
\[
S_I = W(\F_q) \setminus D(\F_q).
\]
Since complexity of both $W$ and $D$ is bounded, Lemma~\ref{lem:smallorlarge} implies that either $|W(\F_q) \setminus D(\F_q)| \leq c_j$ or $|W(\F_q) \setminus D(\F_q)| \geq q\left(1-O_r(q^{-1/2})\right)$ where $c_r$ is a constant depending only on~$r$. In particular, there is a constant $T_r$ such that, for $q$ large enough, we have either $\abs{S_I} \leq T_r$ or $\abs{S_I} \geq \frac{q}{2}$. Since $\E\bigl[\abs{S_I}^{2\ell}\bigr] \leq 2\ell^2$, Markov's inequality gives that 
\begin{equation}\label{eq:tail}
\mathbb{P}\bigl[|S_I| > T_r \bigr] = \mathbb{P}\left[|S_I| \geq \frac{q}{2}\right] = \mathbb{P}\left[|S_I|^{2\ell} \geq (q/2)^{2\ell}\right] \leq \frac{\E(|S_I|^{2\ell})}{(q/2)^{2\ell}} = O_r\left(q^{-2\ell}\right).
\end{equation}

Upon letting $T\eqdef \ell\cdot \max_{r\leq \ell} T_r$, inequality \eqref{eq:tail} implies that the expected number of $(T/\ell,I)$-bad pairs is at most $O_r\left(\abs{V}\abs{U}q^{-2\ell}\right)=O_{r}(1)$.
\end{proof}

\bibliographystyle{plain}
\bibliography{bib}

\end{document}